\documentclass[a4paper,12pt]{article}
\usepackage[utf8]{inputenc}
\usepackage{amsmath,amssymb,amsthm,color}
\newcommand{\norm}[1]{\left\Vert#1\right\Vert}
\newcommand{\abs}[1]{\left\vert#1\right\vert}

\newcommand{\lie}[1]{\mathfrak{#1}}     

\newcommand{\Span}[1]{\operatorname{Span}\left\{#1\right\}}

\newcommand{\hook}{\lrcorner\,}

\newcommand{\so}{\mathfrak{so}}

\DeclareMathOperator{\Tr}{tr}

\DeclareMathOperator{\ric}{Ric}

\newcommand{\dfn}[1]{\emph{#1}}
\newcommand{\id}{\mathrm{Id}}   

\DeclareMathOperator{\End}{End}

\newcommand{\D}[1]{\frac{\partial}{\partial #1}}

\theoremstyle{plain}
\newtheorem{theorem}{Theorem}[section]
\newtheorem{proposition}[theorem]{Proposition}
\newtheorem{lemma}[theorem]{Lemma}
\newtheorem{corollary}[theorem]{Corollary}

\newtheorem{remark}[theorem]{Remark}

\author{Diego Conti and Marisa Fern\'andez}
\title{Einstein almost cok\"ahler manifolds}
\begin{document}

\maketitle

\begin{abstract}
We study an odd-dimensional analogue of the Goldberg conjecture
for compact Einstein almost K\"ahler manifolds. We give an explicit non-compact example of an Einstein almost cok\"ahler manifold that is not cok\"ahler.
We prove that compact Einstein almost cok\"ahler manifolds with non-negative $*$-scalar curvature
are cok\"ahler (indeed, transversely Calabi-Yau); more generally, we 
give a lower and upper bound for the $*$-scalar curvature in the case that the structure is not cok\"ahler. We prove  similar bounds for almost K\"ahler Einstein manifolds that are not K\"ahler.
\end{abstract}

\section{Introduction}
An \emph{almost contact metric structure} $(\alpha, \omega, g)$ on a $(2n+1)$-dimensional 
differentiable manifold $M$ is determined by a pair $(\alpha, \omega)$ of differential forms, 
where $\alpha$ is a $1$-form and $\omega$ is a $2$-form on $M$, and a 
Riemannian metric $g$ on $M$ such that each point of $M$ has an orthonormal coframe $\{e^1,\dotsc, e^{2n+1}\}$ with
\[\alpha=e^{2n+1}, \qquad \omega=e^1\wedge e^2+e^3\wedge e^4+\dots + e^{2n-1}\wedge e^{2n}.\]
If in addition, $\alpha$ and $\omega$ are both parallel with respect to the Levi-Civita connection of the metric $g$, then
$(\alpha, \omega, g)$ is called a \emph{cok\"ahler structure}, and $(M,\alpha, \omega, g)$
is called a \emph{cok\"ahler manifold} \cite{Li}.

By analogy with the terminology used in almost Hermitian geometry (see \cite{Goldberg, Gray, Koto}), we say that
an almost contact metric structure $(\alpha, \omega, g)$ on a manifold $M$ is \dfn{almost cok\"ahler} if 
$\alpha$ and $\omega$ are both closed.
We call $(M,\alpha, \omega, g)$  an \dfn{almost cok\"ahler manifold}.  
Then, the Riemannian product $M\times\mathbb{R}$ (or $M\times S^1$) is an almost K\"ahler manifold
(in particular, K\"ahler if $(\alpha, \omega, g)$ is a cok\"ahler structure) and
$(M,\alpha, \omega)$ is a cosymplectic manifold in the sense of Libermann  \cite{Libermann} since 
$\alpha\wedge\omega^n$ is a volume form of $M$.

In the last years, the geometry and topology of cok\"ahler and almost cok\"ahler  manifolds
have been studied by several authors (see for example \cite{BFM, BG, BO, Blair2, CLM, de Nicola, GMP, Li} and the references therein).

Concerning the geometry of compact almost K\"ahler manifolds, 
the Goldberg conjecture states that the almost complex structure
of a compact Einstein almost K\"ahler manifold is integrable \cite{Goldberg}.
In \cite{Sekigawa}, Sekigawa gives a proof of this conjecture
under the assumption that the scalar curvature of the almost K\"ahler manifold is non-negative. 
This assumption can be replaced by the condition that the $*$-scalar curvature be positive (Corollary~\ref{cor:evengoldberg}); more generally, the same type of argument leads to an estimate for the $*$-scalar curvature (Theorem \ref{thm:evenpinch}).
On the negative side, a complete, almost K\"ahler Einstein manifold  which is not K\"ahler 
was constructed in \cite{ADM} (see also \cite{Hirobe}); this example is not compact, and its scalar curvature is negative.  

An odd-dimensional analogue of the Goldberg conjecture was considered in \cite{Boyer-Galicki:Einstein}, where it is proved that a compact $K$-contact Einstein manifold is Sasakian (see also  \cite{ADM:odd}). Following \cite{CP},
in this paper we consider another odd-dimensional version of this problem, namely:

\medskip
{\em Are all compact Einstein almost cok\"ahler manifolds cok\"ahler?}

\medskip

We note that a negative answer  would disprove the Goldberg conjecture proper, as the product of an Einstein, strictly almost cok\"ahler manifold with itself is Einstein and strictly almost K\"ahler (Proposition~\ref{prop:product}).

A key tool to  attack this problem is the Weitzenb\"ock formula applied to the harmonic forms $\alpha$ and $\omega$ (Lemma \ref{lemma:nablastarnabla}). Indeed, this formula implies that Einstein cok\"ahler manifolds, unlike their even-dimen\-sional counterpart, are Ricci-flat (Proposition~\ref{prop:nocokahlereinstein}). In addition, it implies that any Einstein almost cok\"ahler manifold has non-positive scalar curvature.

A second ingredient is an equality taken from \cite{ADM} relating the curvature and Nijenhuis tensor (with their derivatives) on an almost K\"ahler manifold; a version of this formula was used by Sekigawa in his original proof. An estimate based on this equality leads to our main result (Theorem~\ref{thm:main}),
proving a bound for the difference between the scalar curvature and the $*$-scalar curvature. This difference is zero in the cok\"ahler case; geometrically, this result shows that the underlying almost cosymplectic structure is in some sense close to being integrable. In particular, if one assumes the $*$-scalar curvature to be non-negative, 
then a compact, Einstein almost cok\"ahler manifold is necessarily cok\"ahler (Corollary \ref{cor:goldberg}). Also, as a consequence of Theorem~\ref{thm:main}, we recover
the result of \cite{CP}, namely, any compact, Einstein, almost cok\"ahler manifold whose Reeb vector field is Killing is cok\"ahler (Corollary~\ref{cor:reeb}). 

In section \ref{noncompact-Einstein}, we show that the odd-dimensional analogue of the Goldberg conjecture does not hold in the non-compact setting.
Using results of Lauret on Einstein solvmanifolds \cite{Lauret1, Lauret2}, we construct examples of 
non-compact, complete Einstein almost cok\"ahler
manifolds which are not cok\"ahler.

\section{Almost contact metric structures}\label{preliminaries}

We recall some definitions and results on almost contact metric manifolds (see
\cite{Blair1, Blair2, Boyer-Galicki} for more details).

Let $M$ be a $(2n + 1)$-dimensional manifold. An {\em almost contact structure} on $M$ consists of
a pair $(\alpha, \omega)$ of differential forms on $M$, where $\alpha$ is a 
$1$-form and $\omega$ is a 2-form, such that $\alpha\wedge \omega^n$ is a volume form. 
We call $(M, \alpha, \omega)$ an \textit{almost contact manifold}. 

Therefore, if $(\alpha, \omega)$ is an almost contact structure on $M$, 
the kernel of $\alpha$ defines a codimension one distribution ${\mathcal H} \,=\, \ker\alpha$, and 
the tangent bundle $TM$ of $M$ decomposes as
$$
TM = {\mathcal H} \oplus \langle \xi\rangle\, ,
$$
where $\xi$ is the nowhere vanishing vector field on $M$ (the {\em Reeb vector field} of 
$(\alpha, \omega)$) determined by the conditions
\begin{equation*}\label{Reeb}
\alpha(\xi)=1,\quad \iota_{\xi}(\omega)=0,
\end{equation*} 
where $\iota_{\xi}$ denotes the contraction by $\xi$. 

Since $\omega$ defines a non degenerate 2-form on $\mathcal H$, there exists an almost Hermitian 
structure $(J, g_{\mathcal H})$ on $\mathcal H$ with K\"ahler form the 2-form
$\omega$, that is, there are an endomorphism
$J \colon {\mathcal H} \longrightarrow {\mathcal H}$ and a metric $g_{\mathcal H}$ on ${\mathcal H}$
such that 
$$
J^2 = -Id_{\mathcal H}, \quad g_{\mathcal H}(X,Y)=g_{\mathcal H}(JX,JY), \quad \omega(X,Y) = g_ {\mathcal H} (J X,Y),
$$
for $X, Y\in\mathcal H$.

Thus, given an almost contact structure $(\alpha, \omega)$ on $M$ and fixed
an almost Hermitian structure $(J, g_{\mathcal H})$ on $\mathcal H$ with K\"ahler form 
$\omega$, we have the Riemannian metric $g$ on $M$ given by
$$
g=g_ {\mathcal H} + \alpha^2.
$$
In this case, we say that $g$ is a \dfn{compatible metric} with 
$(\alpha, \omega)$, and $(\alpha, \omega, g)$ is said to be an \dfn{almost contact metric structure} on $M$.
We call $(M, \alpha, \omega, g)$ an \dfn{almost contact metric manifold}.
(Notice that such a metric $g$ is not unique; indeed, it depends of the choice of 
$g_{\mathcal H}$.) 
Hence, for any point $p$ of $M$ there exist a 
neighborhood $U_p$ and an orthonormal coframe $\{e^1,\dotsc, e^{2n+1}\}$ with
\[\alpha=e^{2n+1}, \quad \omega=e^{12}+e^{34}+\dots + e^{2n-1,2n}.\]
Here and in the sequel, $e^{ij}$ is short for $e^i\wedge e^j$.

Under these conditions, the almost complex structure $J$ on $\mathcal H$ defines 
the endomorphism $\phi \colon TM \longrightarrow TM$ by
$$
\phi(X) = J (X), \quad  \phi(\xi)=0,
$$
for any $X\in\mathcal H$. One can check that the quadruplet $(\alpha, \xi, \phi,g)$ satisfies the conditions
\begin{equation*}
\label{almostcontactmetric}
\alpha(\xi) \,=\, 1, \quad \phi^2\,=\, - \id + \xi \otimes \alpha, \quad
g (\phi X, \phi Y)\,= \,g(X, Y) - \alpha(X) \alpha(Y)\, ,
\end{equation*}
for any vector fields $X, Y$ on $M$. Conversely, if $M$ is a differentiable manifold of dimension $2n+1$
with a quadruplet $(\alpha, \xi, \phi,g)$ satisfying \eqref{almostcontactmetric},
then $(\alpha, \omega, g)$ is an almost contact metric structure on $M$, where $\omega$ 
is the 2-form on $M$ given by
$$
\omega(X,Y)=g(\phi X, Y),
$$
for any vector fields $X, Y$ on $M$.

We say that an almost contact metric structure $(\alpha, \omega, g)$ on $M$
is \dfn{almost cok\"ahler} if $\alpha$ and $\omega$ are 
both closed, and \dfn{cok\"ahler} if they are both parallel under the Levi-Civita connection.
On an almost cok\"ahler manifold the forms $\alpha$ and $\omega$ are harmonic (see~\cite[Lemma 3]{GoldbergYano}),
and on a cok\"ahler manifold the Reeb vector field $\xi$ is Killing and parallel (see, for example~\cite{Blair1, Blair2}).

\section{Einstein almost cok\"ahler manifolds}\label{compact-Einstein}
In this section we consider almost cok\"ahler manifolds of dimension
$2n+1$ whose underlying metric $g$ is Einstein in the Riemannian sense, that is, the  Ricci curvature tensor 
satisfies 
\[\ric=\tau g,\]
where $\tau$ is a constant; the scalar curvature is then given by
\[s=(2n+1)\tau.\]
We do not assume compactness in this section.

From now on, we denote by $\nabla$ the Levi-Civita connection of $g$, which induces a second operator
\[\nabla^*\colon \Gamma(T^*M\otimes \Lambda^pM)\to \Gamma(\Lambda^pM), \quad \nabla^*=-\Tr \nabla.\]
If $e_1,\dotsc, e_{2n+1}$ denotes a local orthonormal frame and $e^1,\dotsc, e^{2n+1}$ is its dual coframe, we can express $\nabla^*$ by 
\[\nabla^*(e^i\otimes \beta) = -\sum_{j=1}^{2n+1}\langle e_j,\nabla_{e_j}e_i\rangle \beta - \nabla_{e_i}\beta.\]
Here and in the sequel, $\langle X,Y\rangle$ is an alternative notation for $g(X,Y)$.

The operator $\nabla^*$ is the formal adjoint of $\nabla$ in the sense that, when $\alpha$ and $\beta$ are compactly supported,
\[\int_M \langle \nabla \alpha,\beta\rangle = \int_M \langle \alpha, \nabla^*\beta\rangle.\]
Moreover, when $\beta=\nabla\alpha$ the equation holds pointwise, i.e.
\[\abs{\nabla\alpha}^2=\langle\alpha, \nabla^*\nabla\alpha\rangle.\]
We denote by $R$ the curvature tensor given by 
\[ R(X,Y)Z = \nabla_X\nabla_Y Z - \nabla_Y\nabla_X Z - \nabla_{[X,Y]}Z;\]
we note that  \cite{ADM} uses the opposite sign. Recall the classical formula due to Weitzenb\"ock (see e.g. \cite{Besse}): given a $p$-form $\eta$,
\begin{equation}
 \label{eqn:Weitzenbock}
\Delta\eta=-\sum_{h,k}e^h\wedge (e_k\hook R(e_h,e_k)\eta)+ \nabla^*\nabla \eta.
\end{equation}
On an almost contact metric manifold, the $*$-Ricci tensor is defined as
\[\ric^*(X,Y)=\omega\bigl(\sum_{i=1}^{2n} R(X,e_i)(Je_i), Y\bigr).\]
We shall also consider the $*$-Ricci form
\[\rho^*(X,Y)=\sum_i \langle R(X,e_i)(Je_i), Y\rangle\]
and set
\[\tau^*=\frac1n \langle \omega,\rho^*\rangle .\]
\begin{lemma}
\label{lemma:nablastarnabla}
On any Einstein almost cok\"ahler manifold $(M,\alpha,\omega,g)$ with 
$\ric=\tau g$, 
\[\nabla^*\nabla\alpha = -\tau\alpha, \quad \nabla^*\nabla\omega = 2(\rho^*-\tau\omega).\]
\end{lemma}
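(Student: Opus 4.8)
The plan is to feed the Weitzenb\"ock formula \eqref{eqn:Weitzenbock} the two forms $\alpha$ and $\omega$, which are harmonic on an almost cok\"ahler manifold (as recalled above, following \cite{GoldbergYano}). Since $\Delta\alpha=0$ and $\Delta\omega=0$, the formula collapses to $\nabla^*\nabla\eta=\sum_{h,k}e^h\wedge(e_k\hook R(e_h,e_k)\eta)$ for $\eta\in\{\alpha,\omega\}$, so the whole problem reduces to evaluating this Weitzenb\"ock curvature term on each form. The only inputs will be the algebraic symmetries of $R$, the Einstein condition $\ric=\tau g$, and the compatibility identities for $\phi$.

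For the $1$-form $\alpha$ I would use that the curvature acts by $(R(X,Y)\alpha)(Z)=-\alpha(R(X,Y)Z)$. After the contraction and wedge one is left with $-\sum_k\alpha(R(\,\cdot\,,e_k)e_k)$; the trace $\sum_k R(X,e_k)e_k$ is the Ricci operator applied to $X$ (here one must fix the sign coming from the pair symmetry of $R$ and the stated convention, which is opposite to \cite{ADM}), so by $\ric=\tau g$ this equals $\tau X$ and one obtains $\nabla^*\nabla\alpha=-\tau\alpha$.

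For the $2$-form $\omega$ the curvature acts as a derivation, $(R(X,e_k)\omega)(e_k,Y)=-\omega(R(X,e_k)e_k,Y)-\omega(e_k,R(X,e_k)Y)$, and after the contraction/wedge one is left with two kinds of terms. The first kind again carries the trace $\sum_k R(\,\cdot\,,e_k)e_k$, which the Einstein condition converts into a multiple of $\omega$, producing the summand $-2\tau\omega$. The second kind is the genuinely curvature-dependent piece $T(X,Y)=\sum_k\langle R(X,e_k)Y,\phi e_k\rangle$; using $\omega(Z,W)=\langle\phi Z,W\rangle$ and the skew-symmetry of $\phi$ one identifies it, up to sign and a swap of the last two curvature slots, with $\rho^*$ (indeed $\langle R(X,e_k)\phi e_k,Y\rangle=-\langle R(X,e_k)Y,\phi e_k\rangle$). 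The key manipulation is to show that $T$ is antisymmetric: applying the pair symmetry to rewrite $T(X,Y)=\sum_k\langle R(Y,\phi e_k)X,e_k\rangle$ and then reindexing the orthonormal frame by the $g$-orthogonal map $\phi$ (with $\phi^{-1}=-\phi$ on $\mathcal H$, and $\phi\xi=0$, so the sum is effectively over $\mathcal H$ and matches the range in the definition of $\rho^*$) turns $T(X,Y)$ into $-T(Y,X)$. This collapses the antisymmetrized curvature term to $-2T=2\rho^*$ and yields $\nabla^*\nabla\omega=2(\rho^*-\tau\omega)$.

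The main obstacle is the bookkeeping in the last step: correctly expanding the derivation action of $R$ on the $2$-form, tracking which contractions are full ($k=1,\dots,2n+1$, giving Ricci) versus effectively restricted to $\mathcal H$ (because $\phi\xi=0$, giving $\rho^*$), and nailing down every sign through the curvature symmetries and the stated convention for $R$. Once the antisymmetry of $T$ is established, the remaining identifications are routine.
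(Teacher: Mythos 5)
Your proof is correct and follows essentially the same route as the paper: both apply the Weitzenb\"ock formula \eqref{eqn:Weitzenbock} to the harmonic forms $\alpha$ and $\omega$, use the Einstein condition to evaluate the Ricci contributions, and identify the remaining curvature term with $\rho^*$ via the symmetries of $R$ together with the reindexing $e_k\mapsto\phi e_k$ (which kills the $\xi$-term since $\phi\xi=0$). The only cosmetic difference is that the paper packages the curvature term as the operator $\tilde R$ acting on $\omega$ and invokes the first Bianchi identity, whereas you expand the derivation action of $R(e_h,e_k)$ on $\omega$ directly and establish the antisymmetry of $T$ via the pair symmetry of $R$ --- the same algebra in different clothing.
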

\begin{proof}
If $\eta$ is a $1$-form, the  Weitzenb\"ock formula \eqref{eqn:Weitzenbock} specializes to 
\[\Delta\eta = \nabla^*\nabla\eta +\ric(\eta).\]
where $\ric$ denotes the Ricci operator. By \cite[Lemma 3]{GoldbergYano}, $\alpha$ is harmonic. Then, using the Einstein condition
$\ric=\tau g$, 
we obtain the first formula.

If $\eta$ is a $2$-form,  \eqref{eqn:Weitzenbock} can be written as 
\[\Delta\eta =\nabla^*\nabla\eta  + \ric(\eta)+2\tilde R(\eta);\]
where the Ricci operator acts as derivations, and $\tilde R(\eta)$ denotes the image of $\eta$ under the curvature operator $\tilde R\in\Gamma(\End(\Lambda^2T^*M))$. Applying this to $\eta=\omega$,
\[2\tilde R(\omega)= \sum_i R(e_i,Je_i);\]
by the Bianchi identity, we find
\begin{multline*}
2R(\omega)(X,Y)=\sum_i R(e_i,Je_i,X,Y) = \sum_i -R(Je_i,X,e_i,Y)-R(X,e_i,Je_i,Y)\\
=-2\sum_i R(X,e_i,Je_i,Y)=-2\rho^*(X,Y). 
\end{multline*}
Hence the Weitzenb\"ock formula  gives
\[\nabla^*\nabla\omega = -\ric(\omega)-2R(\omega)=-2\tau\omega+2\rho^*,\]
where we have used the
facts that $\omega$ is harmonic (see \cite[Lemma 3]{GoldbergYano}) and 
the identity acts as twice the identity on $\Lambda^2T^*M$.
\end{proof}

Our first observation is that Einstein cok\"ahler manifolds, unlike their even-dimensional counterpart, are necessarily Ricci-flat. The proof exploits the existence of a non-zero harmonic one-form $\alpha$ and mimics Bochner's proof that a \emph{compact} Einstein manifold with positive curvature cannot have $b_1>0$ (see \cite{Bochner}).

\begin{proposition}
\label{prop:nocokahlereinstein}
Any  Einstein cok\"ahler manifold  $(M,\alpha,\omega,g)$ is Ricci-flat.
\end{proposition}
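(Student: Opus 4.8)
The plan is to exploit the first identity of Lemma~\ref{lemma:nablastarnabla} together with the defining property of a cok\"ahler structure, namely that $\alpha$ is parallel. First I would note that since $(M,\alpha,\omega,g)$ is cok\"ahler, both $\alpha$ and $\omega$ are $\nabla$\dash parallel; in particular parallel forms are closed, so a cok\"ahler structure is in particular almost cok\"ahler and Lemma~\ref{lemma:nablastarnabla} is applicable.

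The core of the argument is then a single line. Because $\nabla\alpha=0$, the rough Laplacian of $\alpha$ vanishes identically, $\nabla^*\nabla\alpha=-\Tr\nabla(\nabla\alpha)=0$. Comparing this with the first formula of Lemma~\ref{lemma:nablastarnabla}, which reads $\nabla^*\nabla\alpha=-\tau\alpha$, we obtain $\tau\alpha=0$ as a $1$\dash form on $M$.

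It then remains only to rule out the degenerate possibility $\alpha\equiv 0$. But $\alpha$ is the contact form of an almost contact structure, so $\alpha\wedge\omega^n$ is a volume form and in particular $\alpha$ is nowhere vanishing; evaluating $\tau\alpha=0$ at any point forces the constant Einstein factor to satisfy $\tau=0$. Hence $\ric=\tau g=0$, and $M$ is Ricci\dash flat.

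I do not expect a genuine obstacle here: the computation is immediate once Lemma~\ref{lemma:nablastarnabla} is in hand, and the only points requiring care are the remark that a cok\"ahler structure is almost cok\"ahler (so the lemma applies) and the nowhere\dash vanishing of $\alpha$. One could equally run the Bochner\dash type argument alluded to before the statement directly from the Weitzenb\"ock formula $\Delta\alpha=\nabla^*\nabla\alpha+\ric(\alpha)$, using that $\alpha$ is harmonic and parallel to conclude $\ric(\alpha)=0$ and hence $\tau=0$; passing through Lemma~\ref{lemma:nablastarnabla} merely packages this step. It is worth emphasising that, in contrast to Bochner's original setting, no compactness or curvature\dash sign hypothesis is needed, precisely because the parallel form $\alpha$ makes the Weitzenb\"ock term $\nabla^*\nabla\alpha$ vanish pointwise rather than only in integrated form.
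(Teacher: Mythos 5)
Your proposal is correct and follows essentially the same route as the paper: the paper's proof is exactly the one-line observation that $\nabla\alpha=0$ combined with the first identity of Lemma~\ref{lemma:nablastarnabla} forces $\tau=0$, with the nowhere-vanishing of $\alpha$ (indeed $\abs{\alpha}\equiv1$) left implicit. Your elaboration of that detail and the remark that no compactness is needed are accurate but add nothing beyond the paper's argument.
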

\begin{proof}
By hypothesis, $\nabla\alpha=0$, so Lemma~\ref{lemma:nablastarnabla} implies that the scalar curvature is zero.
\end{proof}

This result does not hold if $M$ is only assumed to be an almost cok\"ahler manifold; indeed, in Section~\ref{noncompact-Einstein} we will construct an almost cok\"ahler Einstein manifold with negative scalar curvature.  However,
we can prove directly from Lemma~\ref{lemma:nablastarnabla} that the scalar curvature is not allowed to be positive:
\begin{proposition}\label{prop:nonpositivecurvature}
Let $(M,\alpha,\omega,g)$ be a an Einstein almost cok\"ahler manifold. Then
\begin{equation}
 \label{eqn:tautaustarineq}
0\leq-\tau\leq  2n(\tau^*-\tau).
\end{equation}
\end{proposition}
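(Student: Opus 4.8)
The plan is to extract both inequalities in \eqref{eqn:tautaustarineq} from the two identities in Lemma~\ref{lemma:nablastarnabla} by pairing them with the appropriate forms and exploiting the pointwise nonnegativity of $\abs{\nabla\alpha}^2$ and $\abs{\nabla\omega}^2$. The key observation is the pointwise identity $\abs{\nabla\eta}^2=\langle\eta,\nabla^*\nabla\eta\rangle$ recorded in the excerpt, which converts each Weitzenb\"ock output into a sign constraint without any integration, so compactness is not needed.

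First I would treat the one-form $\alpha$. Pairing the first identity $\nabla^*\nabla\alpha=-\tau\alpha$ with $\alpha$ gives, pointwise,
\[
\abs{\nabla\alpha}^2=\langle\alpha,\nabla^*\nabla\alpha\rangle=-\tau\abs{\alpha}^2.
\]
Since $\abs{\alpha}^2=1$ (as $\alpha=e^{2n+1}$ is a unit covector) and $\abs{\nabla\alpha}^2\geq0$, this forces $-\tau\geq0$, which is the left inequality $0\leq-\tau$.

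Next I would treat the two-form $\omega$. Pairing the second identity $\nabla^*\nabla\omega=2(\rho^*-\tau\omega)$ with $\omega$ yields
\[
\abs{\nabla\omega}^2=\langle\omega,\nabla^*\nabla\omega\rangle
=2\langle\omega,\rho^*\rangle-2\tau\abs{\omega}^2.
\]
Here $\abs{\omega}^2=n$ and, by the definition $\tau^*=\tfrac1n\langle\omega,\rho^*\rangle$, the first term equals $2n\tau^*$. Hence $\abs{\nabla\omega}^2=2n\tau^*-2n\tau=2n(\tau^*-\tau)$, and nonnegativity of the left-hand side gives $0\leq 2n(\tau^*-\tau)$; combined with the previous step this chains to $-\tau\leq 2n(\tau^*-\tau)$ once I confirm the middle inequality, which is where the real content lies.

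The main obstacle is the middle inequality $-\tau\leq 2n(\tau^*-\tau)$, equivalently $\abs{\nabla\alpha}^2\leq\abs{\nabla\omega}^2$ after substituting the two pointwise expressions. I would establish this by relating the covariant derivatives $\nabla\alpha$ and $\nabla\omega$ algebraically: on an almost contact metric structure the Reeb field and $\omega$ are tied through $\iota_\xi\omega=0$ and $\omega=e^{12}+\dots+e^{2n-1,2n}$, so differentiating these relations expresses (components of) $\nabla\alpha$ in terms of (components of) $\nabla\omega$, yielding a pointwise bound $\abs{\nabla\alpha}^2\leq\abs{\nabla\omega}^2$. Care must be taken with the normalizations $\abs{\alpha}^2=1$, $\abs{\omega}^2=n$ and with the index bookkeeping in differentiating $\iota_\xi\omega=0$; this comparison of the two Weitzenb\"ock outputs is the step I expect to require the most thought, whereas the two outer inequalities are immediate from sign considerations alone.
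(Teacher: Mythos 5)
Your proposal has the same overall skeleton as the paper's proof: both use Lemma~\ref{lemma:nablastarnabla} together with the constancy of $\abs{\alpha}^2=1$ and $\abs{\omega}^2=n$ (which is exactly what legitimizes the pointwise identity $\abs{\nabla\eta}^2=\langle\eta,\nabla^*\nabla\eta\rangle$ here) to get $\abs{\nabla\alpha}^2=-\tau$ and $\abs{\nabla\omega}^2=2n(\tau^*-\tau)$ pointwise, so that everything reduces to the pointwise bound $\abs{\nabla\alpha}^2\leq\abs{\nabla\omega}^2$. You correctly identify this bound as the real content, but you only sketch it; the good news is that your sketched mechanism, via $\iota_\xi\omega=0$, does close, and differently from the paper. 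Differentiating $\iota_\xi\omega=0$ gives $\iota_{\nabla_X\xi}\omega=-\iota_\xi\nabla_X\omega$; since $\abs{\xi}=1$, the vector $\nabla_X\xi$ is orthogonal to $\xi$, hence lies in $\mathcal H=\ker\alpha$, on which $\iota_Y\omega=(JY)^\flat$, so that $\abs{\iota_{\nabla_X\xi}\omega}=\abs{J\nabla_X\xi}=\abs{\nabla_X\xi}=\abs{\nabla_X\alpha}$; on the other side, contraction of a $2$-form with a unit vector never increases the norm, so $\abs{\iota_\xi\nabla_X\omega}\leq\abs{\nabla_X\omega}$. Combining and summing over an orthonormal frame in $X$ yields $\abs{\nabla\alpha}^2\leq\abs{\nabla\omega}^2$, i.e.\ $-\tau\leq 2n(\tau^*-\tau)$. (Note that only the relation $\iota_\xi\omega=0$ is needed; your other suggestion of differentiating the frame-dependent expression $\omega=e^{12}+\dots+e^{2n-1,2n}$ is not meaningful as stated.) The paper instead differentiates the Hodge-star relation $*\alpha=\frac1{n!}\omega^n$, obtaining $*\nabla_X\alpha=\frac{1}{(n-1)!}\nabla_X\omega\wedge\omega^{n-1}$, expands in an adapted coframe, and reads off $\abs{*\nabla_X\alpha}^2\leq\abs{\nabla_X\omega}^2$ coefficient by coefficient. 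The two mechanisms capture essentially the same components of $\nabla_X\omega$ (those involving $\xi$), but your route avoids the $\omega^{n-1}$ bookkeeping and makes the role of the almost contact structure transparent, while the paper's computation is more self-contained in that it never invokes $J$ or the splitting $TM=\mathcal H\oplus\langle\xi\rangle$. The two outer inequalities are, as you say, immediate sign considerations in either approach.
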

\begin{proof}
By Lemma~\ref{lemma:nablastarnabla},
\[\abs{\nabla\alpha}^2=-\tau, \quad \abs{\nabla\omega}^2=2n(\tau^*-\tau).\]
Observe that 
$*\alpha=\frac1{n!}\omega^n$; therefore, for any tangent vector $X$,
\begin{multline*}
*\nabla_X\alpha =\nabla_X*\alpha =\nabla_X \frac1{n!}\omega^n=\nabla_X\omega \wedge\frac1{(n-1)!}\omega^{n-1}\\
= \sum_{i=1}^{n} (\nabla_X\omega)(e_{2i-1},e_{2i})e^{1,\dots, 2n} + \nabla_X\omega(e_{2i},\xi)e^{1,\dots,\widetilde{2i-1},\dots,2n+1}\\
+ \nabla_X\omega(e_{2i-1},\xi)e^{1,\dots,\widetilde{2i},\dots,2n+1};
\end{multline*}
it follows that $\abs{*\nabla_X\alpha}^2\leq \abs{\nabla_X\omega}^2$, and consequently $0\leq \abs{\nabla\alpha}^2\leq \abs{\nabla\omega}^2$; the statement follows.

\end{proof}

\section{The compact case}\label{compact:case}
In this section we consider potential counterexamples of the Goldberg conjecture, namely compact Einstein manifolds with either an almost K\"ahler structure that is not K\"ahler or an almost cok\"ahler structure that is not cok\"ahler, and prove an integral bound on the difference between scalar curvature and $*$-scalar curvature. The main ingredient is a formula of \cite{ADM} that relates the curvature on an almost K\"ahler manifold to the covariant derivative of the fundamental form.

In order to introduce this formula, let $(N,h,J,\Omega)$ be an almost K\"ahler manifold with Riemannian metric $h$, almost complex structure $J$
and K\"ahler form $\Omega$, and let $\nabla$ be the Levi-Civita connection. Borrowing notation 
from \cite{ADM}, we decompose the Ricci tensor in two components
\[\ric'\in [S^{1,1}], \quad \ric''\in[\![S^{2,0}]\!];\]
here, $[S^{1,1}]$ represents the real subspace of conjugation-invariant elements of $S^{1,1}$, and $[\![S^{2,0}]\!]$ represents $[S^{2,0}+S^{0,2}]$.
In other words, $\ric'$ is the component that commutes with $J$, and $\ric''$ is the component that anticommutes with $J$. 
We define the Ricci and *Ricci forms as
\[\rho^h (X,Y)=\ric'(JX,Y), \quad \rho^{*h}(X,Y)=\sum_{i=1}^{2n} R(X,e_i,Je_i,Y),\]
where $\{e_1,\dots, e_{2n}\}$ is a local orthonormal frame. Note that
in the notation of \cite{ADM}, we can write $\rho^{*h}=-R(\omega)$, where the different sign follows from the conventions.

The scalar and  $*$-scalar curvatures are defined by
\[s= 2\langle \rho^h, \Omega\rangle, \quad s^*=2\langle \rho^{*h},\Omega\rangle.\]
The Weitzenb\"ock formula (see e.g. \cite{ADM}) gives $\nabla^*\nabla\Omega=2(\rho^{*h}-\rho^h)$; in particular,
\begin{equation}
 \label{eqn:intsminusstar}
\abs{\nabla\Omega}^2 = s^*-s.
\end{equation}

The curvature tensor $R$ takes values in 
\[S^2([\![\Lambda^{2,0}]\!]+[\Lambda^{1,1}])=S^2([\![\Lambda^{2,0}]\!]) + [\![\Lambda^{2,0}]\!]\otimes [\Lambda^{1,1}] + S^2([\Lambda^{1,1}]);\]
we denote by $\tilde R$ the first component in this decomposition. As an endomorphism of $[\![\Lambda^{2,0}]\!]$, $\tilde R$ decomposes in two components that commute (respectively, anticommute) with $J$, namely
\[\tilde R=\tilde R'+\tilde R''.\]
We also introduce the two-form 
\[\phi(X,Y)=\langle \nabla_{JX}\Omega,\nabla_Y\Omega\rangle;\]
this is well defined and $J$-invariant by the following observation, which is implicit in \cite{ADM}:
\begin{lemma}
On an almost-K\"ahler manifold $(N,h,J,\Omega)$,
\begin{equation}
 \label{eqn:nablaomegainv}
\langle \nabla_{X}\Omega,\nabla_Y\Omega\rangle= \langle \nabla_{JX}\Omega,\nabla_{JY}\Omega\rangle .
\end{equation}
\end{lemma}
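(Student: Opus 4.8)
The plan is to reduce the statement to a single pointwise identity for $\nabla\Omega$, and then to a frame change. First I would record the universal fact that, on any almost Hermitian manifold, each $\nabla_X\Omega$ is anti-invariant, i.e. lands in $[\![\Lambda^{2,0}]\!]$: differentiating $J^2=-\id$ shows that $\nabla_X J$ anticommutes with $J$, and since $(\nabla_X\Omega)(Y,Z)=h((\nabla_X J)Y,Z)$ this gives $(\nabla_X\Omega)(JY,JZ)=-(\nabla_X\Omega)(Y,Z)$. In particular $\nabla_X\Omega$ has no $(1,1)$-part.

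The heart of the matter is the pointwise identity $(\nabla_{JX}\Omega)(Y,Z)=(\nabla_X\Omega)(JY,Z)$, equivalently $\nabla_{JX}\Omega=\mathcal J(\nabla_X\Omega)$, where $\mathcal J$ is the complex structure on $[\![\Lambda^{2,0}]\!]$ defined by $(\mathcal J\beta)(Y,Z)=\beta(JY,Z)$ (it acts as $i$ on $\Lambda^{2,0}$ and as $-i$ on $\Lambda^{0,2}$). To prove it I would pass to the bigrading and feed in the almost-Kähler hypothesis $d\Omega=0$ through the torsion-free formula $d\Omega(X,Y,Z)=\sum_{\mathrm{cyc}}(\nabla_X\Omega)(Y,Z)$. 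Writing $\mu(X)\in\Lambda^{2,0}$ for the $(2,0)$-component of $\nabla_X\Omega$ (extended complex-linearly in $X$), I would evaluate $d\Omega=0$ on $X,Y\in T^{1,0}$ and $Z\in T^{0,1}$: the first two cyclic terms vanish because $\nabla\Omega$ has no $(1,1)$-part and the pair is of mixed type, while the third equals $\mu(Z)(X,Y)$. Hence $\mu$ vanishes on $T^{0,1}$, so $\mu(X)=\mu(X^{1,0})$ for every $X$; since $(JX)^{1,0}=iX^{1,0}$ this gives $\mu(JX)=i\mu(X)$, and taking the real $[\![\Lambda^{2,0}]\!]$-form yields $\nabla_{JX}\Omega=i\mu(X)-i\overline{\mu(X)}=\mathcal J(\nabla_X\Omega)$.

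With the identity in hand the conclusion is a frame change. Using $(\nabla_{JX}\Omega)(e_a,e_b)=(\nabla_X\Omega)(Je_a,e_b)$ in each slot,
\[\langle \nabla_{JX}\Omega,\nabla_{JY}\Omega\rangle=\tfrac12\sum_{a,b}(\nabla_X\Omega)(Je_a,e_b)(\nabla_Y\Omega)(Je_a,e_b),\]
and replacing the orthonormal frame $\{e_a\}$ by the orthonormal frame $\{Je_a\}$ in the (frame-independent) contraction over the first slot turns the right-hand side into $\langle\nabla_X\Omega,\nabla_Y\Omega\rangle$, which is exactly \eqref{eqn:nablaomegainv}. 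Equivalently, $\mathcal J$ is a pointwise isometry of $(\Lambda^2T^*N,\langle\,,\rangle)$, so $\langle\mathcal J\nabla_X\Omega,\mathcal J\nabla_Y\Omega\rangle=\langle\nabla_X\Omega,\nabla_Y\Omega\rangle$.

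The main obstacle is the pointwise identity. The cyclic relation $\sum_{\mathrm{cyc}}(\nabla_X\Omega)(Y,Z)=0$ coming from $d\Omega=0$ does imply it, but a naive real manipulation—inserting $J$ in a single argument and cycling—yields an underdetermined system; the full force of $d\Omega=0$ only becomes visible when one tests it against the $\pm i$-eigenspaces $T^{1,0},T^{0,1}$ of $J$, which is why I would argue in the bigraded setting. Everything else (the anti-invariance of $\nabla\Omega$ and the frame-independence of the metric contraction) is formal.
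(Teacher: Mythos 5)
Your proof is correct and is in substance the same as the paper's: both arguments turn on the single fact that $d\Omega=0$, fed through the torsion-free identity $d\Omega(X,Y,Z)=\sum_{\mathrm{cyc}}(\nabla_X\Omega)(Y,Z)$, forces $\nabla\Omega$ to lie in $[\![\Lambda^{1,0}\otimes\Lambda^{2,0}]\!]$ rather than all of $\Lambda^1\otimes[\![\Lambda^{2,0}]\!]$, after which the $J$-invariance of $(X,Y)\mapsto\langle\nabla_X\Omega,\nabla_Y\Omega\rangle$ is formal. The only difference is presentational: where the paper cites the module structure of the kernel of skew-symmetrization (Salamon) and concludes by writing $\langle\nabla\Omega,\nabla\Omega\rangle=\sum_a\lambda_a\otimes\overline{\lambda_a}+\overline{\lambda_a}\otimes\lambda_a\in[S^{1,1}]$, you verify the same containment by hand by evaluating $d\Omega=0$ on $X,Y\in T^{1,0}$, $Z\in T^{0,1}$, and package it as the equivalent pointwise identity $\nabla_{JX}\Omega=\mathcal{J}(\nabla_X\Omega)$ — a self-contained and slightly more explicit route, with only the cosmetic caveat that $\mathcal J$ is an isometry of the subbundle $[\![\Lambda^{2,0}]\!]$ (where $\nabla_X\Omega$ lives), not of all of $\Lambda^2T^*N$, since $\mathcal J\beta$ fails to be skew on $(1,1)$-forms.
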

\begin{proof}
The image of the infinitesimal action of $\so(2n)$ on $\Omega$ is $[\![\Lambda^{2,0}]\!]$; therefore, the covariant derivative $\nabla\Omega$ lies in 
\[\Lambda^1\otimes [\![\Lambda^{2,0}]\!]= [\![\Lambda^{1,0}\otimes \Lambda^{2,0}]\!]+[\![\Lambda^{1,0}\otimes \Lambda^{0,2}]\!]\]
(see also \cite[Lemma 3.3]{Salamon}). Since the Levi-Civita connection is torsion-free, $d\Omega$ is the image of $\nabla\Omega$ under the skew-symmetrization map
\[\Lambda^1\otimes [\![\Lambda^{2,0}]\!]\to \Lambda^3, \quad \alpha\otimes\beta\mapsto\alpha\wedge\beta;\]
thus, $\nabla\Omega$ is in the kernel of this map, which has the form 
\begin{equation}
\label{eqn:Vtheunmodule}
[\![V]\!]\subset[\![\Lambda^{1,0}\otimes \Lambda^{2,0}]\!].
\end{equation}
Fixing a basis  $\{\omega_a\}$ on $\Lambda^{2,0}$, orthonormal for the standard hermitian product,
the inclusion~\eqref{eqn:Vtheunmodule} implies that $\nabla\Omega$ can be written as
\[\nabla\Omega = \sum_a \lambda_a\otimes \omega_a + \overline{\lambda_a}\otimes\overline{\omega_a} \in \Lambda^{1,0}\otimes \Lambda^{2,0}+\Lambda^{0,1}\otimes \Lambda^{0,2};\]
it follows that
\[\langle \nabla\Omega, \nabla\Omega\rangle = \sum_a \lambda_a\otimes \overline{\lambda_a}+\overline{\lambda_a}\otimes \lambda_a\]
lies in $[S^{1,1}]$.
\end{proof}

\begin{proposition}[Apostolov-Dr\u{a}ghici-Moroianu \cite{ADM}]
\label{prop:ApostolovDraghiciMoroianu}
On  an almost K\"ahler manifold $(N,h,J,\Omega)$, there is a one-form $\gamma$ such that 
\[\Delta(s-s^*)+d^*\gamma + 2\abs{\ric''}^2 - 8 \abs{\tilde R''}^2 - \abs{\nabla^*\nabla\Omega}^2-\abs{\phi}^2+4\langle \rho^h,\phi\rangle - 4\langle\rho^h,\nabla^*\nabla\Omega\rangle=0.\]
\end{proposition}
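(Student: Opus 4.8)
The plan is to obtain this identity as a pointwise Weitzenb\"ock--Bochner formula for the function $s^*-s=\abs{\nabla\Omega}^2$, following the strategy of \cite{ADM}. The starting point is equation~\eqref{eqn:intsminusstar}, which records $s^*-s=\abs{\nabla\Omega}^2$. Applying the standard Bochner identity for the rough Laplacian to the section $\nabla\Omega$ of $T^*N\otimes\Lambda^2$ gives
\[\tfrac12\Delta(s^*-s)=\tfrac12\Delta\abs{\nabla\Omega}^2=\langle\nabla^*\nabla\nabla\Omega,\nabla\Omega\rangle-\abs{\nabla^2\Omega}^2,\]
so that, after changing sign to pass to $\Delta(s-s^*)$, the whole problem reduces to understanding the term $\langle\nabla^*\nabla\nabla\Omega,\nabla\Omega\rangle$ together with the full second covariant derivative $\nabla^2\Omega$.

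First I would commute the rough Laplacian past the covariant derivative, writing $\nabla^*\nabla\nabla\Omega=\nabla(\nabla^*\nabla\Omega)+\mathcal C(\nabla\Omega)$, where $\mathcal C$ is the Weitzenb\"ock commutator built from $R$ and $\nabla R$ via \eqref{eqn:Weitzenbock}. Into this I would substitute the known expression $\nabla^*\nabla\Omega=2(\rho^{*h}-\rho^h)$. The resulting term $\langle\nabla(\rho^{*h}-\rho^h),\nabla\Omega\rangle$ is then rearranged using the adjointness of $\nabla$ and $\nabla^*$: transferring the derivative off $\rho^{*h}-\rho^h$ and onto $\nabla\Omega$ produces $\langle\rho^{*h}-\rho^h,\nabla^*\nabla\Omega\rangle=\tfrac12\abs{\nabla^*\nabla\Omega}^2$ together with a total derivative, which I would collect into the divergence $d^*\gamma$; here $\gamma$ is, up to sign, the one-form obtained by contracting $\rho^{*h}-\rho^h$ against $\nabla\Omega$. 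Separating the $\rho^{*h}$ and $\rho^h$ pieces at this stage is what generates the $-\abs{\nabla^*\nabla\Omega}^2$ term and the cross term $-4\langle\rho^h,\nabla^*\nabla\Omega\rangle$.

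The heart of the computation is the representation-theoretic bookkeeping. Using the lemma that $\nabla\Omega$ lies in the module $[\![V]\!]$ of \eqref{eqn:Vtheunmodule}, I would expand $\abs{\nabla^2\Omega}^2$ and the curvature commutator $\langle\mathcal C(\nabla\Omega),\nabla\Omega\rangle$ into their type components. Contracting the curvature against two copies of $\nabla\Omega$ and using the splitting $\tilde R=\tilde R'+\tilde R''$ introduced before the statement, the $J$\dash anticommuting part $\tilde R''$ contributes $-8\abs{\tilde R''}^2$, while the Ricci contractions produce $2\abs{\ric''}^2$. The auxiliary two-form $\phi(X,Y)=\langle\nabla_{JX}\Omega,\nabla_Y\Omega\rangle$, well defined and $J$-invariant by \eqref{eqn:nablaomegainv}, is precisely the quadratic expression in $\nabla\Omega$ left over once the curvature terms have been isolated, and it supplies the remaining $-\abs{\phi}^2+4\langle\rho^h,\phi\rangle$.

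The main obstacle, and where essentially all the effort lies, is this final decomposition: matching the irreducible pieces of $\abs{\nabla^2\Omega}^2$ and of the commutator to the prescribed coefficients in front of $\abs{\ric''}^2$, $\abs{\tilde R''}^2$, $\abs{\phi}^2$ and $\langle\rho^h,\phi\rangle$. This step relies on the second Bianchi identity, to rewrite the $\nabla R$ terms of the commutator through $\nabla\rho^{*h}$ and $\nabla\rho^h$, and on the almost-K\"ahler condition $d\Omega=0$, which confines $\nabla\Omega$ to $[\![V]\!]$ and thereby suppresses the contribution of the $J$\dash commuting component $\tilde R'$. Every surviving first-order quantity is a total derivative and is absorbed into $d^*\gamma$, leaving the stated pointwise identity.
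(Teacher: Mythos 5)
First, a point of comparison: the paper does not prove this proposition at all. It is imported verbatim from \cite{ADM}, attributed in the proposition's title, and followed only by the remark that it holds locally; so there is no in-paper argument to match, and a blind proof would have to reconstruct the full Sekigawa--Apostolov--Dr\u{a}ghici--Moroianu computation (cf.\ \cite{Sekigawa, ADM}). Your outline does capture the correct architecture of that computation: Bochner on $\abs{\nabla\Omega}^2=s^*-s$, commutation of $\nabla^*\nabla$ past $\nabla$, substitution of $\nabla^*\nabla\Omega=2(\rho^{*h}-\rho^h)$, absorption of pointwise divergences into $d^*\gamma$, and type decomposition using $\nabla\Omega\in[\![V]\!]$. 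The step $\langle\rho^{*h}-\rho^h,\nabla^*\nabla\Omega\rangle=\tfrac12\abs{\nabla^*\nabla\Omega}^2$ is also fine.

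However, there is a genuine gap, and it sits exactly where you locate ``essentially all the effort'': none of the coefficients $2\abs{\ric''}^2$, $-8\abs{\tilde R''}^2$, $-\abs{\phi}^2$, $+4\langle\rho^h,\phi\rangle$ is derived -- they are asserted -- and two of the attributions are structurally impossible as stated. ``Contracting the curvature against two copies of $\nabla\Omega$'' produces terms linear in $R$ and quadratic in $\nabla\Omega$; such terms can yield neither $-8\abs{\tilde R''}^2$, which is \emph{quadratic} in the curvature, nor $-\abs{\phi}^2$, which is \emph{quartic} in $\nabla\Omega$. The curvature-squared term in fact enters through the skew-symmetric (in the two derivative indices) part of $\nabla^2\Omega$, which by the Ricci identity $\nabla^2_{X,Y}\Omega-\nabla^2_{Y,X}\Omega=-R(X,Y)\cdot\Omega$ equals the curvature acting on $\Omega$ and is governed by $\tilde R''$ precisely because $\nabla\Omega$ takes values in $[\![\Lambda^{2,0}]\!]$; the quartic terms require the almost K\"ahler integrability identities (second covariant derivatives of $J$, Gray-type identities) that convert certain curvature contractions back into expressions quadratic in $\nabla\Omega\otimes\nabla\Omega$. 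Likewise, your claim that $d\Omega=0$ ``suppresses'' the $\tilde R'$ contribution is not automatic: $\tilde R'$ terms do appear at intermediate stages and cancel only after the second Bianchi identity and these almost K\"ahler identities are applied with the correct weights, which is what fixes the coefficients $2$, $8$, $1$, $4$. Without carrying out that bookkeeping, the proposal is a plausible roadmap for the computation in \cite{ADM}, not a proof of the identity.
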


Notice that this formula holds locally, and compactness is not assumed. On the other hand, integrating this identity on a compact manifold yields a formula where the first two terms do not appear,  since the codifferential of a one-form is always the Hodge dual of an exact form.

If $N$ is also Einstein and compact, we can derive from this formula an integral bound on the difference $s^*-s$; by \eqref{eqn:intsminusstar}, this means that $N$ is close to being K\"ahler. More precisely: 
\begin{theorem}
\label{thm:evenpinch}
Every compact Einstein almost K\"ahler manifold $(N,h,J,\Omega)$  which is not K\"ahler satisfies
\[s<\frac1V\int s^*\leq \frac15s<0,\]
where $V$ denotes the volume.
\end{theorem}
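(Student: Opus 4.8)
The plan is to specialize the identity of Proposition~\ref{prop:ApostolovDraghiciMoroianu} to the Einstein case and integrate it over the compact manifold $N$. Since $\ric=\tau h$, the $J$-anti-invariant part $\ric''$ vanishes and $\rho^h=\tau\Omega$, so that $s=2\langle\rho^h,\Omega\rangle=2n\tau$ is constant. On integrating, the terms $\Delta(s-s^*)$ and $d^*\gamma$ drop out (the integral of a Laplacian and of a codifferential over a closed manifold vanish), and the two cross terms collapse once the pairings of $\phi$ and $\nabla^*\nabla\Omega$ with $\Omega$ are known. I expect the identity to reduce to
\[\int_N\bigl(8\abs{\tilde R''}^2+\abs{\nabla^*\nabla\Omega}^2+\abs{\phi}^2\bigr)=-2\tau\int_N(s^*-s).\]

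The key pointwise computations are $\langle\Omega,\phi\rangle=\tfrac12\abs{\nabla\Omega}^2$ and, by \eqref{eqn:intsminusstar}, $\langle\Omega,\nabla^*\nabla\Omega\rangle=\abs{\nabla\Omega}^2=s^*-s$. The first follows by contracting $\phi(X,Y)=\langle\nabla_{JX}\Omega,\nabla_Y\Omega\rangle$ against $\Omega$ and using that $\{Je_i\}$ is again an orthonormal frame. Together with $\rho^h=\tau\Omega$ these give $4\langle\rho^h,\phi\rangle-4\langle\rho^h,\nabla^*\nabla\Omega\rangle=-2\tau(s^*-s)$, which is the collapse claimed above. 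Next, projecting the two-forms $\nabla^*\nabla\Omega$ and $\phi$ onto the line spanned by $\Omega$ (with $\abs{\Omega}^2=n$) and discarding the orthogonal complements yields the pointwise bounds $\abs{\nabla^*\nabla\Omega}^2\geq(s^*-s)^2/n$ and $\abs{\phi}^2\geq(s^*-s)^2/(4n)$, whence
\[8\abs{\tilde R''}^2+\abs{\nabla^*\nabla\Omega}^2+\abs{\phi}^2\geq\frac{5}{4n}(s^*-s)^2.\]
This is where the constant $5$ enters, since $\frac1n+\frac1{4n}=\frac5{4n}$.

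Finally I would combine the displayed identity with this bound and with the Cauchy--Schwarz inequality $\int_N(s^*-s)^2\geq\frac1V\bigl(\int_N(s^*-s)\bigr)^2$ to obtain, after dividing by $\int_N(s^*-s)$,
\[-2\tau\geq\frac{5}{4nV}\int_N(s^*-s);\]
here $\int_N(s^*-s)>0$ because $N$ is not K\"ahler, so $\nabla\Omega\not\equiv0$ by \eqref{eqn:intsminusstar}. In particular $\tau<0$, i.e. $s<0$, while $\int_N s^*>\int_N s=sV$ gives the leftmost strict inequality $s<\frac1V\int_N s^*$. Substituting $s=2n\tau$ and rearranging the last estimate turns it into $\frac1V\int_N s^*\leq\frac15 s$, completing the chain $s<\frac1V\int s^*\leq\frac15 s<0$. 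I expect the main obstacle to be the bookkeeping of the middle paragraph: verifying that the cross terms of Proposition~\ref{prop:ApostolovDraghiciMoroianu} reduce exactly to $-2\tau(s^*-s)$ and, more delicately, that the two independent projection bounds contribute precisely $1/n$ and $1/(4n)$, so that their sum forces the sharp constant $5$; the curvature term $8\abs{\tilde R''}^2$ is merely discarded as nonnegative.
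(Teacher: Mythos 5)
Your proof is correct and takes essentially the same route as the paper: both integrate the Apostolov--Dr\u{a}ghici--Moroianu identity under the Einstein condition (so $\ric''=0$, $\rho^h=\tau\Omega$), discard $8\abs{\tilde R''}^2\geq 0$, bound $\abs{\nabla^*\nabla\Omega}^2$ and $\abs{\phi}^2$ pointwise below by $\frac1n(s^*-s)^2$ and $\frac1{4n}(s^*-s)^2$ respectively, and finish with Cauchy--Schwarz on $\int(s^*-s)$. The only cosmetic difference is that the paper derives the $\abs{\phi}^2$ bound from the diagonal entries via the power-mean inequality ($\abs{\phi}^2\geq\frac1{4n}\abs{\nabla\Omega}^4$, which coincides with your projection bound since $\langle\phi,\Omega\rangle=\frac12\abs{\nabla\Omega}^2$ and $\abs{\nabla\Omega}^2=s^*-s$) and packages the endgame through the substitution $s^*-s=-fs$, arriving at the same inequality $\frac54\int f^2\leq\int f$ that your division by $\int(s^*-s)$ expresses linearly.
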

\begin{proof}
Let the dimension of $N$ be $2n$. 
The Einstein condition implies $\ric=\frac1{2n} s\id$, so $\ric''$ is identically zero and $\rho^h=\frac1{2n}s\Omega$; integrating the formula of Proposition~\ref{prop:ApostolovDraghiciMoroianu}, we obtain
\[\int - 8 \abs{\tilde R''}^2 - \abs{\nabla^*\nabla\Omega}^2-\abs{\phi}^2+\frac2n s\langle \Omega,\phi\rangle - \frac2n s\langle\Omega,\nabla^*\nabla\Omega\rangle=0.\]
The Weitzenb\"ock formula gives 
\begin{multline*}
 \int \abs{\nabla^*\nabla\Omega}^2=\int\abs{2(\rho^{*h}-\frac1{2n}s\Omega)}^2 \geq 
 \int\langle  2\rho^{*h}-\frac1{n}s\Omega,\Omega\rangle^2 \frac1{\abs{\Omega}^2}\\
=\int \frac1{n}(s^*-s)^2,
\end{multline*} 
where we have applied the Cauchy-Schwartz inequality at each point and used $\abs{\Omega}^2=n$.
With respect to an orthonormal basis $\{e_1,\dots, e_{2n}\}$, 
\begin{equation}
\label{eqn:exp1}
\langle \Omega,\phi\rangle = \frac12\sum \phi(e_i,Je_i)=\frac12\sum \langle \nabla_{Je_i}\Omega,\nabla_{Je_i}\Omega\rangle = \frac12\abs{\nabla\Omega}^2.
\end{equation}
Therefore
\[
\int\frac2n s\langle \Omega,\phi\rangle - \frac2n s\langle\Omega,\nabla^*\nabla\Omega\rangle=
-\frac1ns \int \langle \Omega,\nabla^*\nabla\Omega\rangle 
.\]
Hence
\[\int \abs{\phi}^2 	\leq \int  -\frac1{n}(s^*-s)^2 -\frac1{n}s(s^*-s) = -\frac1{n}\int s^* (s^*-s);\]
by \eqref{eqn:intsminusstar}, this is only possible if $s<0$, consistently with Sekigawa's result \cite{Sekigawa}. Again by \eqref{eqn:intsminusstar}, we can write
\[s^*-s=-fs, \quad  f>0.\]
On the other hand,
\begin{multline*}
\abs{\phi}^2 = \frac12\sum_{1\leq i,j\leq 2n} \phi(Je_i,e_j)^2=\frac12\sum_{1\leq i,j\leq 2n}\langle \nabla_{e_i}\Omega, \nabla_{e_j}\Omega\rangle^2\\
\geq\sum_{i} \frac12\abs{\nabla_{e_i}\Omega}^4 \geq \frac1{4n}\bigl(\sum_{i}\abs{\nabla_{e_i}\Omega}^2\bigr)^2 ,
\end{multline*}
where we have used the generalized mean inequality. 
Summing up,
\[\frac1{4n}s^2\int f^2 = \frac1{4n}\int\abs{\nabla\Omega}^4\leq \int \abs{\phi}^2\leq  \int\frac1n (1-f)fs^2.\]
By the Cauchy-Schwartz inequality, this gives
\[\frac5{4V}(\int f)^2\leq \frac54 \int f^2 \leq \int f.\]
In particular, $\int f\leq \frac45 V$.
\end{proof}
As a consequence, we obtain a variation of Sekigawa's theorem that applies when $s^*$, as opposed to $s$, is non-negative.
\begin{corollary}
\label{cor:evengoldberg}
Let  $(N,h,J,\Omega)$ be a compact, Einstein, almost K\"ahler manifold. If $\int s^*\geq0$,  then  $(M,g,\alpha,\omega)$ is K\"ahler.
\end{corollary}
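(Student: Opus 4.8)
The plan is to obtain this as an immediate contrapositive of Theorem~\ref{thm:evenpinch}. First I would assume, towards a contradiction, that $(N,h,J,\Omega)$ is \emph{not} K\"ahler. Since by hypothesis it is compact, Einstein, and almost K\"ahler, all the assumptions of Theorem~\ref{thm:evenpinch} are met, so I may invoke the inequality chain
\[s<\frac1V\int s^*\leq \frac15s<0\]
directly, with no further computation.

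Next I would extract from this chain the single inequality that matters, namely $\frac1V\int s^*\leq \frac15 s<0$. Multiplying through by the positive volume $V$ gives $\int s^*<0$, which contradicts the standing hypothesis $\int s^*\geq 0$. Having reached a contradiction, I conclude that $N$ must be K\"ahler.

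I expect no serious obstacle: the whole argument is a one-line deduction from the quantitative estimate already established. The only subtlety worth flagging is \emph{why} the non-negativity hypothesis $\int s^*\geq 0$ suffices, when one might naively expect to need strict positivity. The point is that Theorem~\ref{thm:evenpinch} delivers a \emph{strict} bound $\frac1V\int s^*\leq \frac15 s$ together with $s<0$, so in the non-K\"ahler case $\int s^*$ is automatically \emph{strictly} negative; the borderline value $\int s^*=0$ is therefore already incompatible with non-integrability. All the analytic work---the Weitzenb\"ock formula, the Apostolov--Dr\u{a}ghici--Moroianu identity of Proposition~\ref{prop:ApostolovDraghiciMoroianu}, and the Cauchy--Schwarz and generalized-mean estimates---was carried out in the proof of Theorem~\ref{thm:evenpinch}, so the corollary is merely its qualitative restatement.
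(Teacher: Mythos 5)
Your proposal is correct and matches the paper exactly: the paper states Corollary~\ref{cor:evengoldberg} as an immediate consequence of Theorem~\ref{thm:evenpinch}, precisely the contrapositive you spell out, where in the non-K\"ahler case $\frac1V\int s^*\leq \frac15 s<0$ forces $\int s^*<0$, contradicting $\int s^*\geq 0$. Your remark on why mere non-negativity (rather than strict positivity) of $\int s^*$ suffices is also the right observation.
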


One way of approaching the odd-dimensional case is through the following observation:
\begin{proposition}
\label{prop:product}
Let  $(M, g, \omega,\alpha)$ be an  almost cok\"ahler Einstein manifold of dimension $2n+1$. A natural almost K\"ahler structure is induced on $M\times M$; it is Einstein and satisfies
\[s=(4n+2)\tau, \quad s^*=4n\tau^*.\]
\end{proposition}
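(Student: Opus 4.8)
The plan is to build the almost Hermitian structure on $M\times M$ explicitly and then read off the three assertions. Write $\pi_1,\pi_2\colon M\times M\to M$ for the two projections and set $\alpha_i=\pi_i^*\alpha$, $\omega_i=\pi_i^*\omega$, with $\xi_i$ the Reeb field of the $i$-th factor; let $G=g_1+g_2$ be the product metric and $\mathcal H_i=\ker\alpha_i$. I would define $\mathcal J\in\End(T(M\times M))$ by letting it act as $\phi$ on each horizontal distribution $\mathcal H_1,\mathcal H_2$ and by rotating the Reeb plane, $\mathcal J\xi_1=\xi_2$ and $\mathcal J\xi_2=-\xi_1$. A direct check gives $\mathcal J^2=-\id$ and compatibility with $G$, with fundamental form $\Omega(\cdot,\cdot)=G(\mathcal J\cdot,\cdot)=\omega_1+\omega_2+\alpha_1\wedge\alpha_2$. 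Since $\alpha$ and $\omega$ are closed, each of $\omega_1,\omega_2,\alpha_1,\alpha_2$ is closed, hence $d\Omega=0$ and $(M\times M,G,\mathcal J,\Omega)$ is almost K\"ahler; this is the ``natural'' structure of the statement.

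For the Einstein claim and the value of $s$, I would use only product Riemannian geometry: the Ricci tensor of a product is the direct sum of the factors' Ricci tensors, so $\ric_G=\tau g_1+\tau g_2=\tau G$ and $G$ is Einstein with the same constant $\tau$; the scalar curvature is then $2(2n+1)\tau=(4n+2)\tau$, as claimed.

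The substantive point is the identity $s^*=4n\tau^*$. Starting from $s^*=2\langle\rho^{*G},\Omega\rangle$ and contracting with $\Omega(\cdot,\cdot)=G(\mathcal J\cdot,\cdot)$, I would first rewrite it as $s^*=\sum_{A,B}R^G(E_B,E_A,\mathcal J E_A,\mathcal J E_B)$ over an orthonormal frame $\{E_A\}$ of $M\times M$. The key structural input is that the curvature $R^G$ of a Riemannian product is block diagonal: it vanishes on any four-tuple of vectors not all lying in a common factor. Choosing the frame adapted to $TM_1\oplus TM_2$ and splitting each factor into its horizontal part and Reeb direction, I would run through the cases: because $\mathcal J$ sends $\xi_1,\xi_2$ into the opposite factor while preserving each $\mathcal H_i$, every term involving a Reeb vector produces a four-tuple straddling both factors and is annihilated by $R^G$. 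Only the horizontal--horizontal contributions within each factor survive, and by symmetry they contribute equally, giving $s^*=2\sum_{a,b=1}^{2n}R(e_b,e_a,Je_a,Je_b)$ in terms of the curvature $R$ and almost complex structure $J$ of a single factor $M$, summed over an orthonormal frame $e_1,\dots,e_{2n}$ of $\mathcal H$.

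It then remains to recognize this sum on $M$. Expanding $\tau^*=\tfrac1n\langle\omega,\rho^*\rangle$ with $\omega(e_b,e_c)=\langle Je_b,e_c\rangle$ on $\mathcal H$ and contracting the index $c$ against $Je_b\in\mathcal H$, I would obtain $\langle\omega,\rho^*\rangle=\tfrac12\sum_{a,b=1}^{2n}R(e_b,e_a,Je_a,Je_b)$, so that the single-factor sum equals $2n\tau^*$ and therefore $s^*=4n\tau^*$. I expect the bookkeeping in the previous paragraph---verifying that precisely the Reeb-involving terms drop out, and tracking the factors of two---to be the main obstacle; in particular it is this vanishing that explains why $s^*$ sees only the $2n$ horizontal directions of each factor (yielding $4n$) whereas $s$ sees all $2n+1$ directions (yielding $4n+2$).
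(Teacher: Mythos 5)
Your proposal is correct, and for the substantive identity $s^*=4n\tau^*$ it takes the route the paper only alludes to. The construction itself is the same one the paper uses ($\Omega=\omega+\tilde\omega+\alpha\wedge\tilde\alpha$ on the Riemannian product, with your $\mathcal{J}$ simply realizing it explicitly), and the Einstein claim with $s=(4n+2)\tau$ is identical. For $s^*$, however, the paper's displayed proof never touches the curvature tensor: it invokes the Weitzenb\"ock consequence $s^*-s=\abs{\nabla\Omega}^2$ (equation \eqref{eqn:intsminusstar}) together with the pointwise norms $\abs{\nabla\alpha}^2=-\tau$ and $\abs{\nabla\omega}^2=2n(\tau^*-\tau)$ coming from Lemma~\ref{lemma:nablastarnabla}, so that $s^*-s=4n(\tau^*-\tau)-2\tau$ and hence $s^*=4n\tau^*$ in two lines --- at the price of routing the computation through the Einstein hypothesis. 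Your direct contraction $s^*=\sum_{A,B}R^G(E_B,E_A,\mathcal{J}E_A,\mathcal{J}E_B)$ against the block-diagonal product curvature is precisely what the paper compresses into ``can be derived similarly'': your case analysis is sound (every term containing $\xi_1$ or $\xi_2$ straddles the two factors because $\mathcal{J}\xi_1=\xi_2$, and mixed horizontal terms die by block-diagonality), and the bookkeeping checks out, since the surviving sum per factor equals $2\langle\omega,\rho^*\rangle=2n\tau^*$ with the paper's normalizations, giving $s^*=2\cdot 2n\tau^*=4n\tau^*$. What your route buys is the observation that $s^*=4n\tau^*$ is a pure curvature identity of the product, valid with no Einstein assumption; what the paper's route buys is brevity, since both of its ingredients were already established earlier, and it simultaneously explains the discrepancy between the coefficients $4n+2$ and $4n$ via $\abs{\nabla\Omega}^2=\abs{\nabla\omega}^2+\abs{\nabla\tilde\omega}^2+\abs{\nabla\alpha}^2+\abs{\nabla\tilde\alpha}^2$, the role your vanishing Reeb terms play in the direct computation.
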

\begin{proof}
Let $(\tilde M,\tilde g, \tilde\omega,\tilde\alpha)$ be another copy of $(M, g, \omega,\alpha)$, and consider the Riemannian product $N=M\times\tilde M$ with the almost-K\"ahler structure determined by
\[\Omega = \omega + \tilde\omega + \alpha\wedge\tilde\alpha.\]
The Ricci tensor on $N$ is given by $\tau g + \tau\tilde g$, giving $s=(4n+2)\tau$; the formula for $s^*$ can be derived similarly, or from 
\[s^*-s=\abs{\nabla\Omega}^2 = \abs{\nabla\omega}^2 +\abs{\nabla\tilde\omega}^2 +\abs{\nabla\alpha}^2 +\abs{\nabla\tilde\alpha}^2 = 4n(\tau^*-\tau)-2\tau.\qedhere\]
\end{proof}

\begin{corollary}
\label{cor:nonoptimal}
Let  $(M,g,\alpha,\omega)$ be a compact, Einstein, almost cok\"ahler manifold of volume $V$ and dimension $2n+1$.  Then 
\[\tau \leq\frac1V\int \tau^*\leq \frac15\tau\leq0.\]
\end{corollary}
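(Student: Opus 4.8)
The plan is to reduce Corollary~\ref{cor:nonoptimal} to the even-dimensional estimate already established in Theorem~\ref{thm:evenpinch}, using the product construction of Proposition~\ref{prop:product} as a bridge. Given a compact Einstein almost cok\"ahler manifold $(M,g,\alpha,\omega)$ of dimension $2n+1$, I would form the product $N=M\times\tilde M$, which by Proposition~\ref{prop:product} is a compact Einstein almost K\"ahler manifold of dimension $4n+2$, with scalar and $*$-scalar curvatures related to those of $M$ by $s=(4n+2)\tau$ and $s^*=4n\tau^*$. Since $N$ has half-dimension $2n+1$, applying Theorem~\ref{thm:evenpinch} to $N$ should give the chain of inequalities $s\leq\frac1{V_N}\int_N s^*\leq\frac15 s\leq 0$, which I then translate back into the stated bounds for $\tau$ and $\tau^*$.

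First I would handle the borderline case carefully. Theorem~\ref{thm:evenpinch} is stated with \emph{strict} inequalities under the hypothesis that $N$ is \emph{not} K\"ahler, whereas Corollary~\ref{cor:nonoptimal} asserts only the weak inequalities and makes no such hypothesis. So I would split into two cases. If $M$ is cok\"ahler, then $N$ is K\"ahler and $M$ is Ricci-flat by Proposition~\ref{prop:nocokahlereinstein}, giving $\tau=0$; one checks that $\tau^*=0$ as well (the $*$-scalar curvature of a K\"ahler manifold equals its scalar curvature, so $4n\tau^*=s^*=s=0$), and all the inequalities hold as equalities. If $M$ is not cok\"ahler, then $N$ is not K\"ahler, since a nonzero $\nabla\omega$ or $\nabla\alpha$ forces $\nabla\Omega\neq0$ via the identity $\abs{\nabla\Omega}^2=\abs{\nabla\omega}^2+\abs{\nabla\tilde\omega}^2+\abs{\nabla\alpha}^2+\abs{\nabla\tilde\alpha}^2$ from the proof of Proposition~\ref{prop:product}; hence Theorem~\ref{thm:evenpinch} applies with strict inequalities, which a fortiori yield the weak inequalities claimed.

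The translation step is then bookkeeping with the volume. Writing $V=\vol(M)$, the product has volume $V_N=V^2$, and $\int_N s^* = \int_N 4n\tilde\tau^*$, which by the Fubini structure of the product metric factors as an integral over $M$ of $\tau^*$ times the constant volume of the second factor; dividing by $V_N$ recovers $\frac1V\int_M \tau^*$ up to the constant $4n$. Substituting $s=(4n+2)\tau$ and this expression for $\frac1{V_N}\int_N s^*$ into the inequalities $s\leq\frac1{V_N}\int_N s^*\leq\frac15 s\leq 0$ and simplifying the common factors should produce exactly $\tau\leq\frac1V\int_M\tau^*\leq\frac15\tau\leq0$.

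The main obstacle I anticipate is verifying that the numerical constants line up, since the dimensional factors differ between $s$ and $s^*$ in Proposition~\ref{prop:product} ($(4n+2)$ versus $4n$). In particular, the clean factor of $\frac15$ surviving the passage from Theorem~\ref{thm:evenpinch} to the corollary is not automatic and must be checked by hand; I would track carefully whether the $*$-scalar curvature of the \emph{product} decomposes additively into the $*$-scalar curvatures of the factors plus a cross term coming from the $\alpha\wedge\tilde\alpha$ piece of $\Omega$, as this is what governs the precise relation $s^*=4n\tau^*$ and hence whether the inequalities transfer with the stated constants rather than weaker ones.
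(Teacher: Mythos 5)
Your overall route---the product $N=M\times\tilde M$ of Proposition~\ref{prop:product} followed by Theorem~\ref{thm:evenpinch}, with the K\"ahler borderline case split off---is the same as the paper's, and your handling of the borderline case (via Proposition~\ref{prop:nocokahlereinstein}) and the Fubini bookkeeping ($V_N=V^2$, $\frac1{V_N}\int_N s^*=\frac{4n}{V}\int_M\tau^*$) is sound. But the constant mismatch you flagged as a worry is where the argument genuinely breaks: the \emph{first} inequality does not transfer through the product. Writing $A=\frac1V\int_M\tau^*$, Theorem~\ref{thm:evenpinch} applied to $N$ gives $(4n+2)\tau< 4nA$, i.e.\ $\tau<\frac{2n}{2n+1}A$. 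Since the other inequality forces $A\leq\frac{2n+1}{10n}\tau\leq 0$, one has $\frac{2n}{2n+1}A\geq A$, so $\tau<\frac{2n}{2n+1}A$ is strictly \emph{weaker} than the claimed $\tau\leq A$ and does not imply it. Concretely, for $n=1$ the values $\tau=-0.9$, $A=-1$ satisfy both product-derived inequalities ($\tau<\frac23A$ and $A\leq\frac3{10}\tau$) while violating $\tau\leq A$; so no amount of simplifying common factors can extract the first inequality from Theorem~\ref{thm:evenpinch} alone. (The mismatch is harmless for the middle inequality, where it works in your favor: $4nA\leq\frac15(4n+2)\tau$ gives $A\leq\frac{2n+1}{10n}\tau\leq\frac15\tau$, because $\frac{2n+1}{10n}\geq\frac15$ and $\tau\leq0$.)

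The missing ingredient is Proposition~\ref{prop:nonpositivecurvature}, which is precisely how the paper obtains the outer inequalities: the pointwise estimate $0\leq-\tau\leq2n(\tau^*-\tau)$ (which comes from $\abs{\nabla\alpha}^2\leq\abs{\nabla\omega}^2$ via Lemma~\ref{lemma:nablastarnabla}, with no product construction at all) gives $\tau^*\geq\tau$ pointwise and $\tau\leq0$, and averaging the first of these yields $\tau\leq\frac1V\int\tau^*$ directly. Once that proposition supplies $\tau\leq\frac1V\int\tau^*$ and $\tau\leq0$, your product argument correctly delivers the remaining inequality $\frac1V\int\tau^*\leq\frac15\tau$, and your resolution of the K\"ahler case is fine (the paper instead deduces $\tau=\tau^*=0$ there from $(4n+2)\tau=4n\tau^*$ combined with $\tau^*\geq\tau$ and $\tau\leq0$, again from Proposition~\ref{prop:nonpositivecurvature}; both routes work). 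So the repair is to import the pointwise proposition rather than trying to make the even-dimensional theorem do all the work.
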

\begin{proof}
By Proposition~\ref{prop:nonpositivecurvature},
 $\tau^*\geq \tau$ and $\tau\leq0$. Let $N=M\times M$ with the induced almost K\"ahler structure, as in Proposition~\ref{prop:product}. If $N$ is K\"ahler, then 
\[(4n+2)\tau =s=s^*=4n\tau^*.\]
This is only possible if $\tau=\tau^*=0$, which makes the statement hold trivially.
If $N$ is not K\"ahler, Theorem~\ref{thm:evenpinch} implies that
\[\frac1V\int (4n)\tau^*\leq \frac15(4n+2)\tau.\qedhere\]
\end{proof}

The estimate of Corollary~\ref{cor:nonoptimal} only makes use of the fact that the induced almost K\"ahler structure on $M\times M$ is Einstein, neglecting other conditions that follow from $M$ being almost cok\"ahler. We can obtain a sharper estimate by making use of these conditions; in order to simplify the argument, we shall work with $M\times S^1$ rather than $M\times M$.

\begin{theorem}
\label{thm:main}
Let  $(M,g,\alpha,\omega)$ be a compact, Einstein, almost cok\"ahler manifold of volume $V$ and dimension $2n+1$.  Then either 
\begin{enumerate}
 \item[1)] $\tau=0=\tau^*$ and  $(M,g,\alpha,\omega)$ is cok\"ahler; or
 \item[2)] \[\frac1{2n}\leq \frac1V\int \frac{\tau-\tau^*}{\tau} \leq
 \frac{4n-1+\sqrt{16n^2-8n-14}}{10n}\]
 and  $(M,g,\alpha,\omega)$ is not cok\"ahler.
\end{enumerate}
\end{theorem}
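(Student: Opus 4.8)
The plan is to pass to the Riemannian product $N = M \times S^1$, where the even-dimensional machinery of Proposition~\ref{prop:ApostolovDraghiciMoroianu} applies. Writing $dt$ for the parallel length element on $S^1$, the form $\Omega = \omega + \alpha\wedge dt$ is the K\"ahler form of an almost K\"ahler structure on $N$ whose complex structure pairs the Reeb field $\xi$ with $\partial_t$. The decisive feature, absent from Corollary~\ref{cor:nonoptimal}, is that $N$ is \emph{not} Einstein when $\tau\neq 0$: since $S^1$ is flat, the Ricci operator of $N$ equals $\tau$ on $TM$ and $0$ on $\langle\partial_t\rangle$. I would first record the resulting data. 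The anti-invariant Ricci component is supported on $\langle\xi,\partial_t\rangle$ with $\abs{\ric''}^2 = \tfrac12\tau^2$, while $\rho^h = \tau\omega + \tfrac{\tau}{2}\,\alpha\wedge dt$ and $\rho^{*h}$ restricts to the $*$-Ricci form of $M$; in particular $s = (2n+1)\tau$ and $s^* = 2n\tau^*$.

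The extra leverage beyond Einstein-ness comes from the almost cok\"ahler conditions. Since $\alpha$ is closed and unit, $\nabla\alpha$ is symmetric and orthogonal to $\alpha$, so $\nabla_\xi\alpha = 0$; evaluating the $J$-invariance identity \eqref{eqn:nablaomegainv} on $N$ at $X=Y=\xi$ and using $\nabla_{\partial_t}\Omega = 0$ then forces $\nabla_\xi\Omega = 0$, hence $\nabla_\xi\omega = 0$. Thus $\nabla\Omega$ is concentrated on $\mathcal H$, which via the power-mean inequality over the $2n$ effective directions sharpens the pointwise bound to $\abs{\phi}^2 \geq \tfrac1{4n}\abs{\nabla\Omega}^4$. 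Next, Lemma~\ref{lemma:nablastarnabla} gives $\abs{\nabla\alpha}^2 = -\tau$, $\abs{\nabla\omega}^2 = 2n(\tau^*-\tau)$, and the exact splitting $\nabla^*\nabla\Omega = 2(\rho^*-\tau\omega) - \tau\,\alpha\wedge dt$, whence $\abs{\nabla^*\nabla\Omega}^2 = 4\abs{\rho^*-\tau\omega}^2 + \tau^2$; differentiating $*\alpha = \tfrac1{n!}\omega^n$ as in Proposition~\ref{prop:nonpositivecurvature} additionally identifies $\nabla_X\alpha$ with the $\mathcal H^*\wedge\alpha$-part of $\nabla_X\omega$, a relation that feeds back into the constant terms.

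I would then integrate the identity of Proposition~\ref{prop:ApostolovDraghiciMoroianu} over the compact $N$ (the $\Delta$ and $d^*\gamma$ terms vanish), insert $\langle\rho^h,\phi\rangle = \tau\langle\omega,\phi\rangle = \tfrac{\tau}{2}\abs{\nabla\Omega}^2$ together with $\langle\rho^h,\nabla^*\nabla\Omega\rangle$ read off from the data above, and bound the right-hand side below: $\abs{\nabla^*\nabla\Omega}^2$ by projecting onto $\omega$ and $\alpha\wedge dt$, and $\abs{\phi}^2$ by the concentrated power-mean estimate. Setting $f = \tfrac{\tau-\tau^*}{\tau}\geq 0$, so that $\abs{\nabla\Omega}^2 = -\tau(2nf+1)$ by \eqref{eqn:intsminusstar}, this collapses to a quadratic integral inequality in $f$; the Cauchy--Schwarz inequality $\int f^2 \geq \tfrac1V(\int f)^2$ converts it into a quadratic inequality for $\bar f = \tfrac1V\int f$, whose larger root is the asserted bound $\tfrac{4n-1+\sqrt{16n^2-8n-14}}{10n}$. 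The lower bound is immediate and pointwise: the inequality $-\tau\leq 2n(\tau^*-\tau)$ of Proposition~\ref{prop:nonpositivecurvature} reads exactly $f\geq\tfrac1{2n}$. For the dichotomy, if $\tau = 0$ then $N$ is Ricci-flat, hence Einstein, so Theorem~\ref{thm:evenpinch} (whose conclusion would force $s<0$) shows $N$ is K\"ahler; thus $\nabla\omega=\nabla\alpha=0$ and $M$ is cok\"ahler with $\tau^*=0$, which is case $1$. If $\tau\neq 0$ then $\tau<0$, and $M$ is not cok\"ahler, since an Einstein cok\"ahler manifold is Ricci-flat (Proposition~\ref{prop:nocokahlereinstein}); this is case $2$.

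The hard part will be the curvature estimate, namely extracting the \emph{sharp} quadratic from Proposition~\ref{prop:ApostolovDraghiciMoroianu}. In the Einstein even-dimensional case of Theorem~\ref{thm:evenpinch} one has $\ric''=0$ and may simply discard the term $8\abs{\tilde R''}^2$; here both the nonzero $\ric''$ and the curvature term $\tilde R''$ contribute, and the extra constant that upgrades the weak estimate of Corollary~\ref{cor:nonoptimal} to the optimal one is exactly what must be recovered by exploiting the cok\"ahler relations (the concentration $\nabla_\xi\omega = 0$ and the identity $*\alpha = \tfrac1{n!}\omega^n$) rather than merely Einstein-ness of $N$. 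Controlling the contribution of $\abs{\tilde R''}^2$ and keeping scrupulous track of these constant terms is the delicate step on which the precise value under the square root depends.
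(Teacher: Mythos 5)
Your architecture is exactly the paper's: the same product $N=M\times S^1$ with $\Omega=\omega+\alpha\wedge\theta$, the same integrated identity of Proposition~\ref{prop:ApostolovDraghiciMoroianu}, the same pointwise data ($\abs{\ric''}^2=\frac12\tau^2$, $\rho^h=\tau\Omega-\frac{\tau}{2}\alpha\wedge\theta$, $\nabla^*\nabla\Omega=2(\rho^*-\tau\omega)-\tau\alpha\wedge\theta$), the same concentration $\phi(\xi,\cdot)=0$ via \eqref{eqn:nablaomegainv} feeding the power-mean bound $\abs{\phi}^2\geq\frac1{4n}\abs{\nabla\Omega}^4$, and the same Cauchy--Schwarz reduction to a quadratic in $\frac1V\int f$. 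But there is one genuine gap, and it sits exactly where you park it as ``the hard part'': the contribution of $\tilde R''$. Moreover, your hint about where that constant comes from points in the wrong direction: the paper does not recover it from the cok\"ahler relations you name ($\nabla_\xi\omega=0$, $*\alpha=\frac1{n!}\omega^n$), but from the product geometry of $N$ together with the Einstein condition. Concretely, since $S^1$ is flat and the metric is a product, $R$ has no components involving $\theta$; writing $R=\sum a_{ijkl}e^{ij}\otimes e^{kl}+b_{ijk}e^{ij}\odot\alpha\wedge e^k+c_{ij}\,\alpha\wedge e^i\otimes\alpha\wedge e^j$ and projecting onto the $J$-anticommuting part of $S^2\bigl(\Span{\alpha\wedge e^i-\theta\wedge Je^i}\bigr)$ picks up the coefficients $\frac18 c_{ij}$, whence $\abs{\tilde R''}^2\geq\frac18\sum_{i,j}c_{ij}^2\geq\frac1{16n}\bigl(\sum_i c_{ii}\bigr)^2=\frac{\tau^2}{16n}$, the last step being Cauchy--Schwarz on the trace, which the Einstein condition identifies with $\ric(\xi,\xi)=\tau$. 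This is not optional bookkeeping: if you merely discard the term via $-8\abs{\tilde R''}^2\leq 0$, the constant in the resulting quadratic $\frac{5n}{V}x^2+(4n-1)\tau x+c\,\tau^2 V\leq0$ becomes $c=\frac1{4n}$ instead of $\frac3{4n}$, and the discriminant yields $\sqrt{16n^2-8n-4}$ rather than the claimed $\sqrt{16n^2-8n-14}$. As written, your argument therefore proves a correct but strictly weaker upper bound; the stated constant is unreachable without the curvature projection.

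Two smaller points. Your disposal of the $\tau=0$ case by noting that $N$ is then Ricci-flat, hence Einstein, and invoking Theorem~\ref{thm:evenpinch} is a legitimate shortcut; the paper instead stays inside its own integral inequality, which at $\tau=0$ reads $\int\abs{\phi}^2\leq-4n\int(\tau^*)^2$, forcing $\tau^*=0$, and then deduces cok\"ahlerness from \eqref{eqn:ktau}. Also, the identity $*\alpha=\frac1{n!}\omega^n$ does not ``feed back into the constant terms'' of the main computation as you suggest: in the paper it appears only inside Proposition~\ref{prop:nonpositivecurvature}, which supplies the sign $\tau\leq0$ and the pointwise lower bound $\frac{\tau-\tau^*}{\tau}\geq\frac1{2n}$ --- both of which you do use correctly.
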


\begin{proof}
Let $t$ be a coordinate on $S^1=\{e^{it}\}$, and write $\theta=dt$. On the product $M\times S^1$,
fix the product metric $h=g+\theta\otimes\theta$, and set
\[\Omega=\omega+\alpha\wedge\theta;\]
we thus obtain a compact almost-K\"ahler manifold $(M\times S^1,h,\Omega,J)$.

By construction, the Ricci tensor of $h$ is 
\[\ric^h=\ric^g = \tau g,\]
where $\tau$ is a constant; the scalar curvature of $h$ is then $(2n+1)\tau$, and  $\ric^h$ splits into the two components 
\[\ric'= \tau(h-\frac12\alpha\otimes\alpha-\frac12\theta\otimes\theta), \quad \ric''=\frac12\tau(\alpha\otimes\alpha-\theta\otimes\theta).\]
By definition, the Ricci form is
\[\rho^h = \tau(\Omega-\frac12\alpha\wedge\theta).\]
Since the vanishing of $\nabla_{\D t}\Omega$ implies that $\langle\phi,\alpha\wedge\theta\rangle$ is zero,  using \eqref{eqn:exp1} we conclude that
\[\langle\rho^h,\phi\rangle= \frac12\tau\abs{\nabla\Omega}^2.\]

By construction,
\[\nabla\Omega=\nabla\omega + \nabla\alpha\wedge\theta,\]
giving
\[\abs{\nabla\Omega}^2=\abs{\nabla\omega}^2 + \abs{\nabla\alpha}^2.\]
Integrating over $M\times S^1$, we find
\begin{equation}
 \label{eqn:ktau}
\int \langle \nabla^*\nabla\omega,\omega\rangle= \int 2n(\tau^*-\tau), \quad \int  \langle\nabla^*\nabla\alpha,\alpha\rangle =\int -\tau,
\end{equation}
where we have used Lemma~\ref{lemma:nablastarnabla}.

Similarly, 
\[\langle \nabla\rho^h,\nabla\Omega\rangle = \tau\abs{\nabla\Omega}^2-\frac12\tau\langle(\nabla\alpha)\wedge\theta,\nabla\Omega\rangle
= \tau\abs{\nabla\Omega}^2-\frac12\tau\abs{\nabla\alpha}^2.\]
Finally, observe that
\[\nabla^*\nabla\Omega=\nabla^*(\nabla\omega + \nabla\alpha\wedge\theta)=\nabla^*\nabla\omega + \nabla^*\nabla\alpha\wedge\theta =2 (\rho^*-\tau\omega)-\tau\alpha\wedge\theta.\]
where $\rho^*$ is the odd-dimensional *Ricci. We can write
\[\rho^*=\tau^*\omega+\rho^*_0,\quad\langle \rho^*_0,\omega\rangle=0,\]
giving
\[\abs{\nabla^*\nabla\Omega}^2=4n(\tau^*-\tau)^2 +\tau^2 + 4\abs{\rho^*_0}^2.\]

We can decompose the space $[\![\Lambda^{2,0}]\!]$ as
\[[\![\Lambda^{2,0}]\!]_{\mathcal{H}} \oplus[\![\Lambda^{2,0}]\!]_{\alpha} = \Span{e^{ij}-Je^i\wedge Je^j}\oplus \Span{\alpha\wedge e^i-\theta\wedge Je^i};\]
writing the curvature as
\[R=\sum a_{ijkl}e^{ij}\otimes e^{kl} +b_{ijk}e^{ij}\odot \alpha\wedge e^k+c_{ij}\alpha\wedge e^i\otimes\alpha\wedge e^j,\]
its projection on $S^2([\![\Lambda^{2,0}]\!]_{\alpha})$ is 
\[
\frac14c_{ij}(\alpha\wedge e^i-\theta\wedge Je^i)\otimes(\alpha\wedge e^j-\theta\wedge Je^j).\]
If we further project on the component that commutes with $J$, we obtain 
\[
 \frac18c_{ij}\bigl((\alpha\wedge e^i-\theta\wedge Je^i)\otimes(\alpha\wedge e^j-\theta\wedge Je^j)
+ (\alpha\wedge Je^i+\theta\wedge e^i)\otimes(\alpha\wedge Je^j+\theta\wedge e^j)\bigr);\]
taking norms, we find
\[ \abs{\tilde R''}^2 \geq \frac1{8}\sum_{i,j}c_{ij}^2 \geq \frac18\sum c_{ii}^2\geq\frac1{16n} \bigl(\sum c_{ii}\bigr)^2= \frac{1}{16n}\tau^2.\]

Integrating the formula of Proposition~\ref{prop:ApostolovDraghiciMoroianu}, we can now compute
\begin{multline*}
0 = \int ( 2\abs{\ric''}^2 - 8 \abs{\tilde R''}^2 - \abs{\nabla^*\nabla\Omega}^2-\abs{\phi}^2+4\langle \rho^h,\phi\rangle - 4\langle\rho^h,\nabla^*\nabla\Omega\rangle)\\
 = \int ( \tau^2 - 8 \abs{\tilde R''}^2 -4n(\tau-\tau^*)^2 -\tau^2 - 4\abs{\rho^*_0}^2  -\abs{\phi}^2+2\tau\abs{\nabla\Omega}^2 - 4 \tau\abs{\nabla\Omega}^2+2\tau\abs{\nabla\alpha}^2 )\\
 = \int (  - 8 \abs{\tilde R''}^2 -4n(\tau-\tau^*)^2 -4\abs{\rho^*_0}^2-\abs{\phi}^2- 4 n\tau(\tau^*-\tau) ).
 \end{multline*}

Summing up,
\[\int \abs{\phi}^2\leq \int -4n\tau^*(\tau^*-\tau)-\frac1{2n}\tau^2.\]
If $\tau=0$, this implies that $\int (\tau^*)^2$ is non-positive, hence $\tau^*=0$. By \eqref{eqn:ktau}, this is only possible if the structure is cok\"ahler, 
giving the first case in the statement. 

Assume now that $\tau<0$.
Observe that $\phi(\xi,Y)=0$, because \eqref{eqn:nablaomegainv} implies
\[
\norm{ \nabla_{X}\Omega}=\norm{\nabla_{JX}\Omega}.
\]
By construction,
\begin{multline*}
\abs{\phi}^2 = \frac12\sum_{1\leq i,j\leq 2n} \phi(Je_i,e_j)^2
=\frac12\sum_{1\leq i,j\leq 2n}
 \bigl(\langle \nabla_{e_i} \omega, \nabla_{e_j}  \omega\rangle+\langle \nabla_{e_i}  \alpha, \nabla_{e_j}\alpha\rangle\bigr)  \bigr)^2\\
 \geq\frac12\sum_{1\leq i\leq 2n} \bigl(\abs{\nabla_{e_i} \omega}^2+\abs{\nabla_{e_i} \alpha}^2\bigr)^2
 \geq\frac1{4n}\biggl(\sum_{1\leq i\leq 2n} \bigl(\abs{\nabla_{e_i} \omega}^2+\abs{\nabla_{e_i} \alpha}^2\bigr)\biggr)^2\\
 =\frac1{4n}(2n(\tau^*-\tau)-\tau)^2.
\end{multline*}
It follows that
\[ \frac1{4n}\int (2n(\tau^*-\tau)-\tau)^2\leq\int \bigl( -4n(\tau^*-\tau)^2 -4n\tau(\tau^*-\tau)-\frac1{2n}\tau^2\bigr);\]
by the Cauchy-Schwartz  inequality, 
\[\frac1V 5n\bigl(\int \tau^*-\tau\bigr)^2 \leq \int 5n(\tau^*-\tau)^2\leq \int \bigl(-(4n-1)\tau(\tau^*-\tau)-\frac3{4n}\tau^2\bigr).\]
Since $\tau$ is a constant, this is a second degree inequality in the variable \mbox{$\int (\tau^*-\tau)$}  with constant coefficients; solving explicitly, we find
\[\frac1V \int (\tau^*-\tau)\leq  \frac{4n-1+\sqrt{16n^2-8n-14}}{10n}(-\tau).\]

The remaining part of the statement follows from Proposition~\ref{prop:nonpositivecurvature}.
\end{proof}

As an immediate consequence, we find:
\begin{corollary}
\label{cor:goldberg}
Let  $(M,g,\alpha,\omega)$ be a compact, Einstein, almost cok\"ahler manifold. If either $\tau\geq0$ or $\int \tau^*\geq0$,  then  $(M,g,\alpha,\omega)$ is Ricci-flat and cok\"ahler.
\end{corollary}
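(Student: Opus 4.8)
The plan is to read off both conclusions directly from the dichotomy in Theorem~\ref{thm:main}, using Proposition~\ref{prop:nonpositivecurvature} to control the sign of $\tau$. Recall that the latter gives $\tau\le 0$ on any Einstein almost cok\"ahler manifold, so the real content is to exclude the second (non-cok\"ahler) alternative of Theorem~\ref{thm:main} under each of the two hypotheses. In the surviving first alternative one has $\tau=0$, whence $\ric=\tau g=0$ and the manifold is Ricci-flat and cok\"ahler, exactly as claimed; so everything reduces to ruling out alternative~2.

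First I would dispose of the hypothesis $\tau\ge 0$. Combined with $\tau\le 0$ from Proposition~\ref{prop:nonpositivecurvature}, this forces $\tau=0$. Since the second alternative of Theorem~\ref{thm:main} requires $\tau<0$, it is excluded, and we are necessarily in the first alternative.

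The case $\int\tau^*\ge 0$ is the one that genuinely uses the quantitative upper bound. Arguing by contradiction, suppose we are in the second alternative, so that $\tau<0$ is a constant. Because $\tau$ is constant, I can rewrite $\frac1V\int\frac{\tau-\tau^*}{\tau}=\frac1{\tau V}\bigl(\tau V-\int\tau^*\bigr)$, and then the upper bound of Theorem~\ref{thm:main} becomes, after multiplying through by the negative quantity $\tau V$ (reversing the inequality) and rearranging,
\[\int\tau^*\le (1-C)\,\tau V,\qquad C=\frac{4n-1+\sqrt{16n^2-8n-14}}{10n}.\]
The decisive point is that $C<1$: this is equivalent to $\sqrt{16n^2-8n-14}<6n+1$, i.e. to $(6n+1)^2-(16n^2-8n-14)=20n^2+20n+15>0$, which holds for every $n$. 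Hence $(1-C)\tau V<0$, forcing $\int\tau^*<0$ and contradicting the hypothesis. Therefore the second alternative cannot occur, and we are again in the first.

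The main thing to watch is the bookkeeping of inequality directions when clearing the negative constant $\tau$, together with the elementary verification that $C<1$; once these are in place the statement follows at once. One could also note in passing that for $n=1$ the radicand $16n^2-8n-14$ is negative, reflecting the fact that the second alternative of Theorem~\ref{thm:main} is vacuous in dimension three, so the conclusion holds there without any hypothesis.
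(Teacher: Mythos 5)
Your proof is correct and is essentially the paper's own argument: the corollary is stated there as an immediate consequence of Theorem~\ref{thm:main} together with Proposition~\ref{prop:nonpositivecurvature}, and your two cases (forcing $\tau=0$ when $\tau\ge0$, and excluding alternative~2 when $\int\tau^*\ge0$ via the upper bound and the check $C<1$, equivalent to $20n^2+20n+15>0$) supply exactly the bookkeeping the paper leaves implicit. Your closing observation that the radicand is negative for $n=1$, so alternative~2 is vacuous in dimension three, is also accurate.
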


In particular, in the case $\tau=0$ we recover the following result of \cite{CP}: 
\begin{corollary}[\cite{CP}]
\label{cor:reeb}
If $(M,g,\alpha,\omega)$ is a compact, Einstein, almost cok\"ahler manifold on which the Reeb vector field is Killing, then
$(M,g,\alpha,\omega)$ is Ricci-flat and cok\"ahler.
\end{corollary}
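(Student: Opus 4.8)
The plan is to reduce the statement to the case $\tau=0$ of Theorem~\ref{thm:main}, equivalently to Corollary~\ref{cor:goldberg} in the regime $\tau\geq0$, so that the entire argument comes down to showing that the Einstein constant $\tau$ vanishes as soon as the Reeb field is Killing. The starting observation is that the Reeb field $\xi$ is metrically dual to $\alpha$: from $g=g_{\mathcal{H}}+\alpha^2$, together with $\xi\perp\mathcal{H}$ and $\alpha(\xi)=1$, one checks that $g(\xi,Y)=\alpha(Y)$ for every vector field $Y$, i.e. $\alpha=\xi^\flat$. Consequently $\nabla\alpha$ is just the covariant derivative of the dual of $\xi$, and its symmetric and antisymmetric parts carry transparent geometric meaning.

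First I would decompose $\nabla\alpha$ into its symmetric and antisymmetric components. Using $(\nabla_Y\alpha)(Z)=g(\nabla_Y\xi,Z)$, the antisymmetric part equals $\tfrac12\,d\alpha$, which vanishes because the structure is almost cok\"ahler (so $\alpha$ is closed), while the symmetric part equals $\tfrac12\,\mathcal{L}_\xi g$, which vanishes precisely because $\xi$ is Killing. Hence both parts are zero and $\nabla\alpha=0$; that is, $\alpha$ is parallel. This is the conceptual crux of the argument: a closed one-form that is metrically dual to a Killing field is automatically parallel.

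With $\nabla\alpha=0$ established, Lemma~\ref{lemma:nablastarnabla} gives $0=\nabla^*\nabla\alpha=-\tau\alpha$; since $\alpha$ is nowhere vanishing (equivalently, using the pointwise identity $\abs{\nabla\alpha}^2=-\tau$ from the proof of Proposition~\ref{prop:nonpositivecurvature}), we conclude $\tau=0$. Thus the metric is Ricci-flat, and in particular $\tau\geq0$, so Corollary~\ref{cor:goldberg} applies and yields that $(M,g,\alpha,\omega)$ is Ricci-flat and cok\"ahler, as claimed.

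I do not expect a genuine obstacle here: once $\alpha$ is shown to be parallel, the rest is a direct appeal to the preceding results. The only point requiring care is the bookkeeping of signs and factors in the identifications $(\nabla_Y\alpha)(Z)=g(\nabla_Y\xi,Z)$, $d\alpha=\operatorname{Alt}(\nabla\alpha)$ and $\mathcal{L}_\xi g=\operatorname{Sym}(\nabla\alpha)$, but these are standard. I note that the parallelism of $\alpha$ and the vanishing of $\tau$ are obtained pointwise, without invoking compactness; compactness enters only through Corollary~\ref{cor:goldberg}, via the integral estimate of Theorem~\ref{thm:main}, and it is that step which upgrades $\tau=0$ to the parallelism of $\omega$ and hence to the full cok\"ahler conclusion (through \eqref{eqn:ktau}).
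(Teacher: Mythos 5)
Your proposal is correct and is essentially the paper's own argument: the paper likewise observes that the Killing condition forces $\nabla\alpha$ to be skew-symmetric, so the closedness of $\alpha$ makes it parallel, whence $\tau=0$ (via Lemma~\ref{lemma:nablastarnabla}) and Corollary~\ref{cor:goldberg} applies. Your write-up merely makes explicit the symmetric/antisymmetric decomposition of $\nabla\alpha$ and the pointwise derivation of $\tau=0$, which the paper leaves implicit.
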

\begin{proof}
The condition on the Reeb vector field implies that $\nabla\alpha$ is skew-symmetric, and therefore completely determined by $d\alpha$.
Since $\alpha$ is closed, it is also parallel. This implies that $\tau=0$, so Corollary~\ref{cor:goldberg} applies.
\end{proof}

\section{Einstein almost cok\"ahler manifolds which are not cok\"ahler}\label{noncompact-Einstein}
In this section we give a five-dimensional example of an Einstein almost cok\"ahler manifold which is not cok\"ahler.

We consider a standard extension of a $4$-dimensional Ricci nilsoliton (see \cite{Heber}), namely the Lie algebra
$\lie{g}=\langle e_1, e_2, e_3, e_4, e_5\rangle$ defined by the equations
$$
de^1=\frac{\sqrt{3}}{2} e^{25}+\frac{1}{2} e^{14}, \quad de^2=\frac{\sqrt{3}}{2}  e^{15}+\frac{1}{2} e^{24}, \quad 
de^3= e^{12}+e^{34}, \quad de^4= de^5=0,
$$
where $\langle e^1, e^2, e^3, e^4, e^5\rangle$ is the dual basis for $\lie{g}^*$ and $e^{ij}$ is short for $e^i\wedge e^j$.

We define $G$ to be the connected, simply connected Lie group with Lie algebra $\lie{g}$.
\begin{proposition}
\label{prop:example}
The solvable Lie group $G$ has an Einstein almost cok\"ahler structure which is not cok\"ahler and satisfies 
\[\frac{\tau-\tau^*}{\tau}=\frac14.\]
\end{proposition}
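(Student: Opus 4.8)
The plan is to realise the structure explicitly at the Lie algebra level, verify the almost coKähler and Einstein conditions by a direct left-invariant computation, and then extract the ratio $\frac{\tau-\tau^*}{\tau}$ from Lemma~\ref{lemma:nablastarnabla} with no extra curvature work. First I would fix the left-invariant almost contact metric structure obtained by declaring $\{e^1,\dots,e^5\}$ orthonormal and setting
\[ \alpha = e^5, \qquad \omega = e^{12}+e^{34}, \]
with Reeb field $\xi=e_5$ and compatible almost complex structure $Je_1=e_2$, $Je_3=e_4$ on $\ker\alpha$. Since $de^5=0$, the form $\alpha$ is closed; for $\omega$ the structure equations give $d(e^{12})=-e^{124}$ and $d(e^{34})=e^{124}$, so $d\omega=0$ even though neither summand is separately closed. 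As $\alpha\wedge\omega^2=2\,e^{12345}\ne0$, this is a genuine almost coKähler structure.

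The computational core is to show that $g$ is Einstein. Since everything is left-invariant I would read off the structure constants from the equations for $de^i$, compute the Levi-Civita connection algebraically from Koszul's formula (one finds $\nabla_{e_i}e_j=0$ whenever $i\in\{4,5\}$, the other $\nabla_{e_i}e_j$ being small multiples of the $e_k$), and then assemble $R(e_i,e_j)e_k$ and the Ricci tensor to verify that $\ric=-\tfrac32\,g$. The work is reduced by the orthogonal automorphisms $e_1\leftrightarrow e_2,\ e_3\mapsto-e_3$ and $\mathrm{diag}(-1,-1,1,1,1)$ of $\lie{g}$, which are isometries and force most off-diagonal Ricci components to vanish. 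This establishes the Einstein condition with $\tau=-\tfrac32$. Because $\tau\ne0$, Proposition~\ref{prop:nocokahlereinstein} immediately rules out the structure being coKähler (equivalently, the connection computation gives $\nabla\alpha\ne0$).

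With the Einstein condition in hand, the ratio follows from Lemma~\ref{lemma:nablastarnabla}. Differentiating $\alpha$ and $\omega$ with the connection already computed yields
\[ \nabla\alpha = -\tfrac{\sqrt3}2\,(e^1\otimes e^2+e^2\otimes e^1), \qquad \nabla\omega = \tfrac{\sqrt3}2\,(e^1\otimes e^{15}-e^2\otimes e^{25}), \]
so that $\abs{\nabla\alpha}^2=\abs{\nabla\omega}^2=\tfrac32$. The Lemma gives $\abs{\nabla\alpha}^2=-\tau$ and $\abs{\nabla\omega}^2=2n(\tau^*-\tau)=4(\tau^*-\tau)$ with $n=2$; hence $\tau=-\tfrac32$ (consistent with the Ricci computation), $\tau^*-\tau=\tfrac38$, and
\[ \frac{\tau-\tau^*}{\tau}=\frac{-3/8}{-3/2}=\frac14. \]

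I expect the only genuine obstacle to be the Ricci computation of the second step: it is entirely routine but lengthy, and the automorphism symmetries are the main device for keeping it manageable. The conceptual payoff is that the equality $\abs{\nabla\alpha}^2=\abs{\nabla\omega}^2$ is exactly the equality case in Proposition~\ref{prop:nonpositivecurvature}, so this example saturates the lower bound $\tfrac1{2n}=\tfrac14$ of Theorem~\ref{thm:main}.
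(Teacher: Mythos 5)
Your proposal is correct, and your computational claims all check out: the connection indeed satisfies $\nabla_{e_4}=\nabla_{e_5}=0$, your expressions $\nabla\alpha=-\tfrac{\sqrt3}{2}(e^1\otimes e^2+e^2\otimes e^1)$ and $\nabla\omega=\tfrac{\sqrt3}{2}(e^1\otimes e^{15}-e^2\otimes e^{25})$ are right, and $\abs{\nabla\alpha}^2=\abs{\nabla\omega}^2=\tfrac32$ reproduces the paper's values $\tau=-\tfrac32$, $\tau^*=-\tfrac98$. The overall route is the same as the paper's --- a direct left-invariant verification on this solvable group, with the Einstein condition in both cases left as a routine check (the paper simply asserts $\ric=-\tfrac32 g$; your use of the two orthogonal automorphisms to kill most off-diagonal Ricci terms is a sensible organization of that check, though note they do not handle $\ric(e_1,e_2)$ or $\ric(e_4,e_5)$, which still need direct computation) --- but you diverge at the final step in a way worth recording. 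The paper computes the $*$-Ricci form outright, $\rho^*=-\tfrac34 e^{12}-\tfrac32 e^{34}$, reads off $\tau^*=\tfrac1n\langle\omega,\rho^*\rangle=-\tfrac98$, and deduces non-cok\"ahlerness from $\tau^*\neq\tau$; you bypass $\rho^*$ entirely, extracting $\tau^*-\tau$ from the first-order quantity $\abs{\nabla\omega}^2$ via Lemma~\ref{lemma:nablastarnabla}, and rule out cok\"ahlerness more cheaply via Proposition~\ref{prop:nocokahlereinstein}, since $\tau\neq0$. Your version trades a second-order computation (assembling $R$ and contracting against $J$) for a first-order one, which is genuinely lighter once the connection is in hand; its only implicit hypothesis is the pointwise identity $\abs{\nabla\omega}^2=\langle\omega,\nabla^*\nabla\omega\rangle$, valid here because $\abs{\omega}^2=n$ is constant --- the same identity the paper itself uses in Proposition~\ref{prop:nonpositivecurvature}, so nothing is missing. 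What the paper's direct computation buys in exchange is the finer fact that $\rho^*$ is not proportional to $\omega$, invisible to your trace argument but not needed for the statement. Your closing observation, that $\abs{\nabla\alpha}^2=\abs{\nabla\omega}^2$ means the example saturates the lower bound $\tfrac1{2n}=\tfrac14$ coming from \eqref{eqn:tautaustarineq}, is exactly the remark the paper records after the proposition.
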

\begin{proof}
Let $g$ be the left invariant metric on $G$ given by
\[g= (e^1)^2+(e^2)^2+(e^3)^2+(e^4)^2+(e^5)^2.\]
One can check that $g$ is an Einstein metric on $G$. Indeed, the Ricci curvature tensor satisfies 
\[\ric=-\frac{3}{2} e^1\otimes e^1-\frac{3}{2} e^2\otimes e^2-\frac{3}{2} e^3\otimes e^3-\frac{3}{2} e^4\otimes e^4-\frac{3}{2} e^5\otimes e^5.\]
Take the pair $(\alpha, \omega)$ of forms on $G$ given by
\[\alpha=e^5, \quad \omega=e^{12}+e^{34}.\]
Then $(\alpha, \omega, g)$ defines an almost cok\"ahler structure on $G$ since $d\alpha=d\omega=0$,
$\alpha\wedge\omega^2\not=0$ and $g$ is compatible with $(\alpha, \omega)$ in the sense given in section \ref{preliminaries}.
Moreover $\rho^*=-\frac34 e^{12}-\frac32 e^{34}$, so
\[\tau = -\frac32,\quad  \tau^*=-\frac98.\]
Since $\tau^*-\tau$ is not zero,  $(\alpha, \omega, g)$ is not a cok\"ahler structure. 
In fact, there is no parallel left invariant
$2$-form on this Lie group, so no invariant cok\"ahler structure compatible with the metric $g$ exists.
\end{proof}
\begin{remark}
Even though this example is not compact, the value of $\frac{\tau-\tau^*}{\tau}$ is consistent with the inequalities of Theorem~\ref{thm:main}. In fact, it is the smallest value compatible with \eqref{eqn:tautaustarineq}.
\end{remark}
\begin{remark}
We note that a result in \cite{Dotti} asserts that no solvable unimodular Lie group
admits a left invariant metric of strictly negative Ricci curvature. In fact, it is easy to verify that the Lie group $G$ of Proposition~\ref{prop:example} is not unimodular; in particular, it does not have a uniform discrete subgroup, i.e. a discrete subgroup $\Gamma$ such that $\Gamma\backslash G$ is compact.
\end{remark}

\bigskip

\vskip.3cm

\noindent {\bf Acknowledgments.} 
We are very grateful to the referees for useful comments
that helped to improve the paper.
This work was partially supported through Project MICINN (Spain) 
MTM2011-28326-C02-02,  Project MINECO (Spain) MTM2014-54804-P, FIRB 2012 ``Geometria differenziale e teoria geometrica delle funzioni'' and PRIN 2010-2011 ``Variet\`a reali e complesse: geometria, topologia e analisi armonica''.

\smallskip

\small\noindent Dipartimento di Matematica e Applicazioni, Universit\`a di Milano Bicocca,  Via Cozzi 55, 20125 Milano, Italy.\\
\texttt{diego.conti@unimib.it}

\medskip
\small\noindent Universidad del Pa\'{\i}s Vasco, Facultad de Ciencia y Tecnolog\'{\i}a, Departamento de Matem\'aticas,
Apartado 644, 48080 Bilbao, Spain. \\
\texttt{marisa.fernandez@ehu.es}\\

\end{document}